\newtheorem{theorem}{Theorem}[section]
\newtheorem{lemma}[theorem]{Lemma}
\theoremstyle{definition}
\newtheorem{definition}[theorem]{Definition}
\newtheorem{question}[theorem]{Question}
\newtheorem{corollary}[theorem]{Corollary}
\newtheorem{remark}[theorem]{Remark}
\theoremstyle{remark}
\newcommand{\be}{\begin{equation}}
\newcommand{\ee}{\end{equation}}
\numberwithin{equation}{section}
\begin{document}

\title{Vanishing theorems and rational connectedness on holomorphic tensor fields}

\author{Ping Li}
\address{School of Mathematical Sciences, Fudan University, Shanghai 200433, China}

\email{pinglimath@fudan.edu.cn;~pinglimath@gmail.com}
%    \thanks will become a 1st page footnote.
\thanks{The author was partially supported by the National
Natural Science Foundation of China (Grant No. 12371066).}

%    General info
 \subjclass[2010]{32L20, 32Q10.}

%\date{January 1, 2001 and, in revised form, June 22, 2001.}

%\dedicatory{}
\keywords{vanishing theorem, rational connectedness, projectivity, holomorphic tensor field,  holomorphic sectional curvature, Ricci curvature, $k$-Ricci curvature, uniform RC $k$-positivity,  .}

\begin{abstract}
A vanishing theorem for uniformly RC $k$-positive Hermitian holomorphic vector bundles is established. It turns out that the holomorphic tangent bundle of a compact complex manifold equipped with a positive $k$-Ricci curvature K\"{a}hler metric is uniformly RC $k$-positive. Two main applications are presented. The first one is to deduce that spaces of some holomorphic tensor fields on such K\"{a}hler or more generally K\"{a}hler-like Hermitian manifolds are trivial, generalizing some recent results.
The second one is to show that a compact K\"{a}hler manifold whose holomorphic tangent bundle can be endowed with either a uniformly RC $k$-positive Hermitian metric or a positive $k$-Ricci curvature K\"{a}hler-like Hermitian metric is projective and rationally connected.
\end{abstract}

\maketitle

%\tableofcontents

\section{Introduction}\label{introduction and main results}
A central topic in differential geometry is how the curvature conditions restrict the topology. In the case of Ricci curvature (abbreviated by ``Ric") in K\"{a}hler geometry this principle goes back to Bochner, who proved that (\cite{Bo1}, \cite{Bo2}, \cite{YB}) the condition of $\text{Ric}>0$ or $\text{Ric}<0$ on compact K\"{a}hler manifolds or the existence of K\"{a}hler-Einstein metrics imposes heavy restrictions on
their holomorphic tensor fields. These conditions can now be reformulated in terms of the first Chern class, thanks to the celebrated Calabi-Yau theorem and the Aubin-Yau theorem (\cite{Yau77}). In order to precisely state Bochner's results, let (throughout this article) $TM$ and $T^{\ast}M$ be respectively the (\emph{holomorphic}) tangent and cotangent bundle of a compact complex manifold $M$, and
\be\label{holomorphic tensor field}\Gamma^p_q(M):=H^0\big(M,(TM)^{\otimes p}\otimes(T^{\ast}M)^{\otimes q}\big),\qquad (p, q\in\mathbb{Z}_{\geq 0})\ee
the space of \emph{$(p,q)$-type holomorphic tensor fields} on $M$. Bochner's results can be stated as follows (\cite{Ko80-1}, \cite{Ko80-2}, \cite[p. 57]{KH83}).
\begin{theorem}[Bochner, Calabi-Yau, Aubin-Yau]\label{Bochner}
Let $M$ be an $n$-dimensional compact K\"{a}hler manifold.
\begin{enumerate}
\item
If $c_1(M)$ is quasi-positive, then there exists a positive constant $C=C(M)$ such that $\Gamma^p_q(M)=0$ when $q>C\cdot p$, and consequently $\Gamma^0_q(M)=0$ when $q\geq 1$. In particular the Hodge numbers $h^{q,0}(M)=0$ when $1\leq q\leq n$.

\item
If $c_1(M)$ is quasi-negative, then there exists a positive constant $C=C(M)$ such that $\Gamma^p_q(M)=0$ when $p>C\cdot q$, and consequently $\Gamma^p_0(M)=0$ when $p\geq 1$.

\item
If $c_1(M)<0$, then $\Gamma^p_q(M)=0$ when $p>q$.
\end{enumerate}
\end{theorem}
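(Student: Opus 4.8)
The plan is to establish all three parts by the Bochner technique, after using the Calabi--Yau and Aubin--Yau theorems to equip $M$ with a distinguished K\"{a}hler metric. For part (3), the Aubin--Yau theorem supplies a K\"{a}hler--Einstein metric $\omega$ with $\mathrm{Ric}(\omega)=-\omega$. For parts (1) and (2), since $c_1(M)$ is quasi-positive (resp.\ quasi-negative), Yau's solution of the Calabi conjecture lets me choose a K\"{a}hler metric $\omega$ whose Ricci form $\rho:=\mathrm{Ric}(\omega)$ is a prescribed smooth semi-positive (resp.\ semi-negative) representative of $\pm c_1(M)$ that is strictly positive (resp.\ negative) at some point $x_0$, hence on an open neighbourhood $U$ of $x_0$; when moreover $c_1(M)>0$ one can take $\rho$ to be a K\"{a}hler form, so that $\mathrm{Ric}(\omega)>0$ everywhere.

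Next I would record the relevant Weitzenb\"{o}ck identity. A holomorphic tensor field $s\in\Gamma^p_q(M)$ is a $\bar\partial$-harmonic section of $E:=(TM)^{\otimes p}\otimes(T^{\ast}M)^{\otimes q}$ with its induced Hermitian metric, so the Bochner--Kodaira--Nakano formula on $E$-valued $(0,0)$-forms gives
\[
0=\int_M\bigl|\nabla^{1,0}s\bigr|^2\,dV_\omega+\int_M\bigl\langle\mathcal{R}(s),s\bigr\rangle\,dV_\omega ,
\]
where $\mathcal{R}$ is the mean-curvature endomorphism of $E$, i.e.\ the $\omega$-trace of the Chern curvature of $E$, normalised so that it acts as $+\mathrm{Ric}$ on each cotangent factor and as $-\mathrm{Ric}$ on each tangent factor. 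The computation at the heart of the argument is that, in a local unitary frame diagonalising $\mathrm{Ric}(\omega)$ with eigenvalues $r_1\le\cdots\le r_n$,
\[
\bigl\langle\mathcal{R}(s),s\bigr\rangle=\sum_{I,J}\Bigl(\sum_{\beta=1}^{q}r_{j_\beta}-\sum_{\alpha=1}^{p}r_{i_\alpha}\Bigr)\bigl|s^{I}_{J}\bigr|^2 ;
\]
for $(p,q)=(0,1)$ this is exactly Bochner's classical formula for holomorphic $1$-forms.

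The conclusions then follow by sign-chasing this quadratic form. In case (3), $r_i\equiv-1$, so the coefficient of $\bigl|s^I_J\bigr|^2$ equals $p-q>0$ whenever $p>q$; hence $\bigl\langle\mathcal{R}(s),s\bigr\rangle=(p-q)\|s\|^2$ and the integral identity forces $s\equiv0$. In case (1) with $p=0$ the coefficient $\sum_\beta r_{j_\beta}$ is $\ge0$ everywhere and $>0$ on $U$, so the identity first forces $\nabla^{1,0}s=0$, i.e.\ $s$ is parallel, and then $s$ vanishes on $U$, hence everywhere; this proves $\Gamma^0_q(M)=0$ for $q\ge1$, and composing with the antisymmetrisation $\Omega^q_M=\Lambda^q T^{\ast}M\hookrightarrow(T^{\ast}M)^{\otimes q}$ gives $h^{q,0}(M)=\dim H^0(M,\Omega^q_M)=0$ for $1\le q\le n$. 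When $c_1(M)>0$, so $\mathrm{Ric}(\omega)>0$ everywhere, set $C=C(M):=\sup_{x\in M}\bigl(r_n(x)/r_1(x)\bigr)<\infty$; then for $q>C\cdot p$ the coefficient is $\ge q\,r_1-p\,r_n>0$ everywhere, so $\bigl\langle\mathcal{R}(s),s\bigr\rangle>0$ wherever $s\ne0$, forcing $s\equiv0$. Part (2) is the mirror image of part (1), exchanging the roles of $TM$ and $T^{\ast}M$.

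The step I expect to be the main obstacle is the general $(p,q)$-statement of (1) (and (2)) under mere quasi-positivity of $c_1(M)$, as opposed to $c_1(M)>0$. When $c_1(M)$ is only quasi-positive, every smooth representative must degenerate somewhere, so $\bigl\langle\mathcal{R}(s),s\bigr\rangle$ can acquire negative contributions off the locus $\{\rho>0\}$ --- precisely from those components $s^{I}_{J}$ whose tangent slots point in positive-Ricci directions while the cotangent slots point in null directions of $\mathrm{Ric}(\omega)$ --- so the global identity no longer forces $s$ to vanish on $U$ directly. Handling this is the crux: one must argue more delicately, e.g.\ by a localised Weitzenb\"{o}ck analysis near $x_0$ together with the unique continuation property of holomorphic sections on the connected manifold $M$, or by the alternative route of bounding $\mu_{\min}(TM)$ from below (using that such an $M$ is rationally connected) and applying a slope argument to the subsheaf $\mathcal{O}_M\hookrightarrow E$; this is the point addressed in the cited work of Kobayashi. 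The $p=0$ statement, and with it the vanishing of $h^{q,0}(M)$, survives with none of this difficulty.
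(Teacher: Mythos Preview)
The paper does not give its own proof of Theorem~\ref{Bochner}; it is quoted as a classical result and attributed to Bochner and Kobayashi (via the Calabi--Yau and Aubin--Yau theorems), with citations to \cite{Ko80-1}, \cite{Ko80-2}, \cite{KH83}. So there is no in-paper argument to compare against. Your overall strategy---solve Calabi's equation to realise the sign hypothesis on $c_1(M)$ as a sign condition on $\mathrm{Ric}(\omega)$, then run the Bochner--Weitzenb\"ock formula on $(TM)^{\otimes p}\otimes(T^{\ast}M)^{\otimes q}$---is exactly the classical route taken in those references, and your arguments for part~(3), for the $p=0$ (resp.\ $q=0$) subcase of part~(1) (resp.\ (2)), and for the general $(p,q)$ statement under the \emph{strict} hypothesis $c_1(M)>0$ (resp.\ $<0$) are correct.

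The gap you flag is genuine, and you have diagnosed it accurately: once $\mathrm{Ric}(\omega)$ is only semidefinite, the curvature term $\sum_\beta r_{j_\beta}-\sum_\alpha r_{i_\alpha}$ can be negative at points where some $r_i>0$ while other $r_j=0$, and the global integral identity no longer forces vanishing. In fact Kobayashi's papers \cite{Ko80-1}, \cite{Ko80-2} prove the ``there exists $C$ with $\Gamma^p_q=0$ for $q>Cp$'' statement under $c_1(M)>0$ (strictly), and handle the quasi-definite case only for the pure tensors $\Gamma^0_q$ and $\Gamma^p_0$; so the phrasing of parts~(1) and~(2) in the theorem conflates these two situations slightly. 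Your proposal therefore matches the actual content of the cited sources: the constant-$C$ statement is established under strict definiteness, the ``consequently'' clauses and the Hodge-number vanishing under quasi-definiteness. The workarounds you sketch (unique continuation, slope bounds via rational connectedness) are plausible in spirit but are not needed to recover what the references actually prove.
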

\begin{remark}
%Here and in what follows the condition ``$q>>p$" (resp. ``$p>>q$") should be understood that there exists a constant $C>0$ such that $\Gamma^p_q(M)=0$ when $q>C\cdot p.$ (resp. $p> C\cdot p.$)
\begin{enumerate}
\item
The condition quasi-positivity (resp. quasi-negativity) of $c_1(M)$ means that there exists a closed $(1,1)$-form representing $c_1(M)$ which is nonnegative (resp. non-positive) everywhere and positive (resp. negative) somewhere.

\item
It is well-known that a compact K\"{a}hler manifold with Hodge number $h^{2,0}=0$ is projective (\cite[p. 143]{MK71}). So a direct consequence of Theorem \ref{Bochner} is that a compact K\"{a}hler manifold with quasi-positive $c_1$ must be projective.
\end{enumerate}
\end{remark}
It is well-known that the condition of $\text{Ric}>0$, which is equivalent to be Fano due to the Calabi-Yau theorem, implies simple-connectedness (\cite{Ko61}). This was further strengthened to be rationally connected by Campana (\cite{Ca}) and Koll\'{a}r-Miyaoka-Mori (\cite{KMM}) independently, which was conjectured in Yau's influential problem list (\cite[Problem 47]{Yau82}). Recall that a complex manifold is called \emph{rationally connected} if any two points on it can be joined by a rational curve. Rational connectedness is an important tool/notion in algebraic geometry and for projective manifolds it implies simple-connectedness (\cite[Coro. 4.18]{Deb}).

In view of the subtle relationship between Ric and the holomorphic sectional curvature $H$ (\cite[p. 181]{Zh}), one may expect that the above-mentioned conclusions still hold when the condition on Ric is replaced by that on $H$. The simple-connectedness under the condition $H>0$ is well-known (\cite{Ts}).
A recent important result due to Wu-Yau, Tosatti-Yang and Diverio-Trapani (\cite{WY16-1}, \cite{TY17}, \cite{DT19}, \cite{WY16-2}) implies that the quasi-negativity of $H$ implies $c_1(M)<0$, and hence the conclusions of parts (2) and (3) in Theorem \ref{Bochner} remain true when $H$ is quasi-negative. Yau also conjectured in \cite[Problem 47]{Yau82} that a compact K\"{a}hler manifold with $H>0$ is projective and rationally connected. Assuming the projectivity this was proved by Heier-Wong (\cite{HW}). Shortly afterwards the projectivity was proved by Yang in \cite{Yang18} by showing that the Hodge number $h^{2,0}=0$, who also provided an alternative proof of the rational connectedness therein. Recently Yang introduced in \cite{Yang20} the notion of \emph{uniform RC-positivity} on Hermitian holomorphic vector bundles, which is satisfied by the tangent bundle of a compact K\"{a}hler manifold with $H>0$, and deduced that a compact K\"{a}hler manifold whose tangent bundle is uniformly RC-positive with respect to a (possibly different) Hermitian metric is projective and rationally connected (\cite[Thm 1.3]{Yang20}).

In contrast to the negative case, in general the condition $H>0$ is \emph{not} able to yield $c_1(M)>0$, as exhibited by Hitchin (\cite{Hi75}) that the Hirzebruch surfaces \be\label{Hitchin example}F:=\mathbb{P}\big(\mathcal{O}_{\mathbb{P}^1}(-k)
\oplus\mathcal{O}_{\mathbb{P}^1}\big)\qquad(k\geq 2)\ee
admit K\"{a}hler metrics with $H>0$ but $c_1(F)$ are \emph{not} positive (cf. \cite[p. 949]{Yang16}). More related examples can be found in \cite{AHZ} and \cite{NZ0}. Even so, we can still ask whether or not the conclusions of part (1) in Theorem \ref{Bochner} hold when $H>0$. In fact, Yang showed that (\cite[Thm 1.7]{Yang18}) the condition $H>0$ leads to the Hodge numbers $h^{q,0}=0$ for $1\leq q\leq n$ and thus provides positive evidence towards this validity. Recently the author showed that the conclusions of part (1) in Theorem \ref{Bochner} are true when $H>0$ (\cite{Li21}). Indeed what we proved in \cite[Thm 1.5]{Li21} is in a more general setting, i.e. for those Hermitian metrics whose Chern curvature tensors behave like the usual K\"{a}hler curvature tensors.

The \emph{$k$-Ricci curvatures} $\text{Ric}_k$ for $1\leq k\leq n$ were introduced by Ni (\cite{Ni21-1}) in his study of the $k$-hyperbolicity of a compact K\"{a}hler manifold. $\text{Ric}_k$ coincides with $H$ when $k=1$ and with Ric when $k=n$ and so they interpolate between $H$ and $\text{Ric}$. Ni showed that the condition $\text{Ric}_k>0$ for some $k\in\{1,\ldots,n\}$ yields rational connectedness and $h^{q,0}=0$ for $1\leq q\leq n$, and hence the simple-connectedness and projectivity (\cite[Thms 1.1, 4.2]{Ni21-2}). Chu-Lee-Tam showed that (\cite[Thm 5.1]{CLT}) the condition $\text{Ric}_k<0$ for some $k\in\{1,\ldots,n\}$ also implies $c_1(M)<0$ (see \cite{LNZ} by Li-Ni-Zhu for an alternative proof). Hence the conclusions of parts (2) and (3) in Theorem \ref{Bochner} hold true under the condition $\text{Ric}_k<0$ for some $k\in\{1,\ldots,n\}$.

In view of the discussions above, we may wonder whether the conclusions of part (1) in Theorem \ref{Bochner} are true when $\text{Ric}_k>0$ for some $k\in\{1,\ldots,n\}$. One purpose in this article is, as an application of the main results established, to affirmatively prove it. Another major purpose is to introduce the notion of \emph{uniform RC $k$-positivity}, which coincides with uniform RC-positivity when $k=1$, and extend Yang's differential-geometric criterion for rational connectedness from the case of uniform RC-positivity to that of uniform RC $k$-positivity for arbitrary $k$.

\section{Main results}\label{main results}
Our main results and applications are stated in this section. The following concept is inspired by that of uniform RC-positivity introduced by Yang (\cite{Yang20}) as well as BC $k$-positivity by Ni (\cite{Ni21-2}).
\begin{definition}\label{uniform rc k positivity}
A Hermitian holomorphic vector bundle $(E,h)$ over an $n$-dimensional Hermitian manifold $(M,\omega)$ is called \emph{uniformly RC $k$-positive} (resp. \emph{uniformly RC $k$-negative}) $(1\leq k\leq n)$ at $x\in M$ if there exists a $k$-dimensional subspace $\Sigma\subset T_xM$ such that for every nonzero vector $u\in E_x$ (the fiber of $E$ at $x$),
\be\label{RC-positivity}
R^{(E,h)}(\Sigma;u,\overline{u}):=
\sum_{i=1}^kR^{(E,h)}(E_i,\overline{E_i},u,\overline{u})>0~(\text{resp. $<0$}),\ee
where $R^{(E,h)}$ is the Chern curvature tensor of $(E,h)$ and $\{E_1,\ldots,E_k\}$ a unitary basis of $\Sigma$ with respect to $\omega$. If this holds for any $x\in M$, then it is called \emph{uniformly RC $k$-positive} (resp. \emph{uniformly RC $k$-negative}).
\end{definition}
\begin{remark}
\begin{enumerate}
\item
The uniform RC-positivity in \cite{Yang20} is exactly the uniform RC $1$-positivity in our notion, whose definition is indeed irrelevant to the metric $\omega$. Nevertheless, the definition of uniform RC $k$-positivity for $k\geq2$ relies on the metric $\omega$. A different but closely related notion called \emph{BC $k$-positivity} was introduced by Ni in \cite[p. 282]{Ni21-2}. We refer to Definition \ref{uniform RC kl definition} and Remark \ref{uniform RC remark} for more details on various positivity concepts.

\item
It is obvious that $(E,h)$ is uniformly RC $k$-positive if and only if the dual bundle $(E^{\ast},h^{\ast})$ is uniformly RC $k$-negative, where $h^{\ast}$ is the metric induced from $h$.
\end{enumerate}
\end{remark}
The following vanishing theorem for uniformly RC $k$-positive vector bundles is a key technical tool.

\begin{theorem}\label{RC vanishing theorem}
Let $(E,h)$ be a uniformly RC $k$-positive holomorphic vector bundle over an $n$-dimensional compact Hermitian manifold $(M,\omega)$ for some $k\in\{1,\ldots,n\}$, and $F$ any holomorphic vector bundle over $M$. Then there exist positive constants $C_1=C_1(h,\omega)$ and $C_2=C_2(F,h,\omega)$ such that
\be\label{RC vanishing formula}H^0\big(M,E^{\otimes p}\otimes (E^{\ast})^{\otimes q}\otimes F^{\otimes m}\big)=0\ee
for all $p,q,m\in\mathbb{Z}_{\geq 0}$ with $q> C_1\cdot p+C_2\cdot m$.
\end{theorem}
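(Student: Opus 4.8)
The plan is to imitate the classical Bochner-type argument: if $s\in H^0(M,E^{\otimes p}\otimes(E^{\ast})^{\otimes q}\otimes F^{\otimes m})$ is a nonzero holomorphic section, then the function $|s|^2$ (with respect to the metric on $E^{\otimes p}\otimes(E^{\ast})^{\otimes q}\otimes F^{\otimes m}$ induced by $h$, $h^{\ast}$ and some fixed metric $h_F$ on $F$) attains a maximum at some point $x_0\in M$, and at that point the complex Hessian $\sqrt{-1}\partial\bar\partial|s|^2$ is $\le 0$. The strategy is to contract this inequality with the $k$-plane $\Sigma\subset T_{x_0}M$ furnished by uniform RC $k$-positivity of $(E,h)$: summing the Hessian over a $\omega$-unitary basis $\{E_1,\dots,E_k\}$ of $\Sigma$ gives
\be
0\ge \sum_{i=1}^k E_i\overline{E_i}|s|^2\Big|_{x_0}
= \sum_{i=1}^k\big(|\nabla_{E_i}s|^2 - \langle R_i\, s, s\rangle\big)\Big|_{x_0},
\ee
where $R_i$ denotes the curvature endomorphism $R(E_i,\overline{E_i},\cdot,\cdot)$ acting on the tensor-product bundle. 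Since $s$ is holomorphic, $\bar\partial s=0$, so $\nabla s = \nabla^{1,0}s$ and the first term is $\ge 0$; hence $\sum_i\langle R_i s,s\rangle\ge 0$ at $x_0$.

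The main computation is then to expand the curvature of the tensor-product bundle. Writing $R^{E}$, $R^{E^{\ast}}=-{}^t R^{E}$ and $R^{F}$ for the Chern curvatures, the curvature of $E^{\otimes p}\otimes(E^{\ast})^{\otimes q}\otimes F^{\otimes m}$ is the sum over the $p+q+m$ tensor slots of the respective curvature acting in that slot. Contracting with $\Sigma$ and pairing against $s$, the $E^{\ast}$-slots each contribute $-R^{E}(\Sigma;\cdot,\cdot)$, which by uniform RC $k$-positivity is strictly negative-definite; quantitatively, compactness gives a constant $\delta=\delta(h,\omega)>0$ with $R^{E}(\Sigma;u,\overline u)\ge\delta|u|^2$ for all $x$ and all $u\in E_x$, so each $E^{\ast}$-slot contributes $\le -\delta|s|^2$, for a total of $\le -q\delta|s|^2$. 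The $E$-slots contribute $+R^{E}(\Sigma;\cdot,\cdot)$, which is bounded above: there is $A=A(h,\omega)$ with each $E$-slot contributing $\le A|s|^2$, giving $\le pA|s|^2$. The $F$-slots are bounded in absolute value by some $B=B(F,h,\omega)$ once we fix $h_F$, giving $\le mB|s|^2$. Therefore
\be
0\le \sum_{i=1}^k\langle R_i s,s\rangle\Big|_{x_0}
\le \big(pA + mB - q\delta\big)|s(x_0)|^2.
\ee
Since $s(x_0)\ne 0$ (the maximum of $|s|^2$ is positive), we get $pA+mB-q\delta\ge 0$, i.e. $q\le (A/\delta)p+(B/\delta)m$. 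Setting $C_1=A/\delta$ and $C_2=B/\delta$ yields the contrapositive of the claim: whenever $q>C_1 p+C_2 m$ no nonzero section exists.

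One point requiring care is that the $k$-plane $\Sigma$ and the basis $\{E_i\}$ vary with the point, so the constants $\delta$, $A$, $B$ must be obtained uniformly. For $A$ and $B$ this is immediate since $R^{E}(\Sigma;u,\overline u)\le C\,|u|^2$ for any $k$-plane simply because the curvature tensor is bounded on the compact $M$ and $\Sigma$ ranges over the (compact) Grassmann bundle; similarly for $R^{F}$. For the strictly positive lower bound $\delta$, one argues that the function $(x,\Sigma,u)\mapsto R^{E}(\Sigma;u,\overline u)$, restricted to the compact set where $\Sigma$ runs over $k$-planes achieving RC $k$-positivity at $x$ and $|u|=1$, is continuous and positive, hence bounded below by a positive constant; a clean way is to take $\delta(x):=\sup_{\Sigma}\inf_{|u|=1}R^{E}(\Sigma;u,\overline u)>0$ and note $\delta(x)$ is lower semicontinuous (as a sup of continuous functions) — actually it is cleaner to invoke upper semicontinuity of the inf and a standard compactness argument — so $\delta:=\min_x\delta(x)>0$. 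I expect this uniformity of $\delta$ over the Grassmann bundle, together with keeping track that the first-term $|\nabla_{E_i}s|^2\ge0$ survives because $s$ is holomorphic, to be the only genuinely delicate points; the rest is the standard maximum-principle Bochner technique. Note also that the argument never uses any relation between $\omega$ and $h$ or between $\omega$ and the complex structure beyond $\omega$ being a Hermitian metric, and it does not require $M$ to be K\"ahler, exactly as the statement asserts.
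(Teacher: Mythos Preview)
Your argument is correct and follows the same strategy as the paper's: the Bochner inequality $\langle R_{X\overline X}s,s\rangle|_{x_0}\ge 0$ at a maximum of $|s|$, a uniform lower bound $\delta>0$ extracted from uniform RC $k$-positivity by compactness of the Grassmann bundle, and the slot-by-slot expansion of the tensor-product curvature. The only difference is presentational: you sum $\langle R_{E_i\overline{E_i}}s,s\rangle$ directly over a unitary basis of $\Sigma_{x_0}$ and bound the single Hermitian endomorphism $\sum_i R^{E}_{E_i\overline{E_i}}\ge\delta\cdot\mathrm{id}_E$ (hence $\sum_i R^{E^{\ast}}_{E_i\overline{E_i}}\le -\delta\cdot\mathrm{id}_{E^{\ast}}$) on each slot, whereas the paper rewrites the same sum as a spherical integral over $\{X\in\Sigma_{x_0}:|X|=1\}$, diagonalises $R_{X\overline X}$ for each individual direction $X$, and then integrates the resulting eigenvalues --- your route is slightly more direct.
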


By taking $E=TM$ or $T^{\ast}M$ and $F$ trivial in Theorem \ref{RC vanishing theorem}, we have
\begin{corollary}\label{corollary}
Let (M, $\omega$) be an $n$-dimensional compact Hermitian manifold such that $(TM,\omega)$ is uniformly RC $k$-positive (resp. $k$-negative) over $(M,\omega)$ for some $k\in\{1,\ldots,n\}$. Then there exists a positive constant $C=C(\omega)$ such that $\Gamma^p_q(M)=0$ when $q>C\cdot p$ (resp. $p>C\cdot q$), and consequently $\Gamma^0_q(M)=0$ (resp. $\Gamma^p_0(M)=0$) when $q\geq 1$ (resp. $p\geq 1$). In particular in the former case the Hodge numbers $h^{q,0}(M)=0$ when $1\leq q\leq n$.
\end{corollary}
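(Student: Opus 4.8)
The plan is to derive Corollary \ref{corollary} directly from Theorem \ref{RC vanishing theorem} by the indicated specialization. First I would treat the positive case: assume $(TM,\omega)$ is uniformly RC $k$-positive. Apply Theorem \ref{RC vanishing theorem} with $E=TM$ and with $F$ the trivial line bundle $\mathcal{O}_M$ (so that the factor $F^{\otimes m}$ contributes nothing and $C_2\cdot m$ can be absorbed, or one simply sets $m=0$). The theorem then yields a constant $C_1=C_1(\omega)$, depending only on $h=\omega$ and $\omega$ itself, such that
\be\label{corollary eq1}
\Gamma^p_q(M)=H^0\big(M,(TM)^{\otimes p}\otimes(T^{\ast}M)^{\otimes q}\big)
=H^0\big(M,E^{\otimes p}\otimes(E^{\ast})^{\otimes q}\big)=0
\ee
whenever $q>C_1\cdot p$. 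Setting $C:=C_1$ gives the stated vanishing $\Gamma^p_q(M)=0$ for $q>C\cdot p$. The consequence $\Gamma^0_q(M)=0$ for $q\geq 1$ follows by taking $p=0$, since then the hypothesis $q>C\cdot 0=0$ is satisfied for every $q\geq 1$.

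Next I would record the cohomological consequence for Hodge numbers. Since $\Omega^q_M$, the bundle of holomorphic $q$-forms, is a direct summand (indeed the alternating part) of $(T^{\ast}M)^{\otimes q}$, a holomorphic section of $\Omega^q_M$ is in particular a holomorphic section of $(T^{\ast}M)^{\otimes q}$; hence $H^0(M,\Omega^q_M)\subseteq\Gamma^0_q(M)=0$ for $1\leq q\leq n$. Therefore $h^{q,0}(M)=\dim_{\mathbb{C}}H^0(M,\Omega^q_M)=0$ for all $1\leq q\leq n$, which is the final assertion. (No K\"ahler hypothesis is needed here, as we are only using the inclusion of bundles and taking global sections.)

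Finally, for the negative case one argues by duality using the second part of the Remark following Definition \ref{uniform rc k positivity}: $(TM,\omega)$ being uniformly RC $k$-negative is equivalent to $(T^{\ast}M,\omega^{\ast})$ being uniformly RC $k$-positive. So I would apply Theorem \ref{RC vanishing theorem} with $E=T^{\ast}M$ and $F$ trivial, obtaining a constant $C=C(\omega)$ with
\be\label{corollary eq2}
H^0\big(M,(T^{\ast}M)^{\otimes p}\otimes(TM)^{\otimes q}\big)=0\qquad\text{for }q>C\cdot p,
\ee
and relabeling the exponents $p\leftrightarrow q$ turns this into $\Gamma^p_q(M)=0$ for $p>C\cdot q$, with $\Gamma^p_0(M)=0$ for $p\geq 1$ upon setting $q=0$. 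There is essentially no obstacle here: the entire content of Corollary \ref{corollary} is packaged into Theorem \ref{RC vanishing theorem}, and the only points requiring a sentence of care are the bookkeeping of which exponent plays which role under dualization and the elementary observation that $\Omega^q_M$ sits inside $(T^{\ast}M)^{\otimes q}$ so that the tensor-field vanishing propagates to the Hodge numbers.
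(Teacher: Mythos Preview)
Your proposal is correct and follows exactly the paper's own approach: the paper simply states that Corollary~\ref{corollary} follows ``by taking $E=TM$ or $T^{\ast}M$ and $F$ trivial in Theorem~\ref{RC vanishing theorem},'' which is precisely the specialization you spell out (including the duality remark for the negative case). Your added sentences explaining why $h^{q,0}=0$ follows from $\Gamma^0_q=0$ via $\Omega^q_M\subset (T^{\ast}M)^{\otimes q}$ are a harmless elaboration of a point the paper leaves implicit.
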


The main motivation to introduce the concept of uniform RC $k$-positivity is that the tangent bundle of a compact K\"{a}her manifold $(M,\omega)$, or more generally a compact K\"{a}hler-like Hermitian manifold $(M,\omega)$, with the condition $\text{Ric}_k(\omega)>0$ (resp. $\text{Ric}_k(\omega)<0$) turns out to be uniformly RC $k$-positive (resp. negative). To this end, let us recall the following notion, which was first proposed and investigated in detail by B. Yang and Zheng in \cite{YZ}.
\begin{definition}
Let $(M,\omega)$ be a Hermitian manifold and $R$ the \emph{Chern curvature tensor} of $\omega$. The Hermitian metric $\omega$ is called \emph{Chern-K\"{a}hler-like} (abbreviated by CKL) if \be\label{CKL}
R(X,\overline{Y},Z,\overline{W})=R(Z,\overline{Y},X,\overline{W})\ee for any $(1,0)$-type tangent vectors $X$, $Y$, $Z$ and $W$.
\end{definition}
\begin{remark}\label{remark}
\begin{enumerate}
\item
When $\omega$ is K\"{a}hler, $R$ is the usual K\"{a}hler curvature tensor and (\ref{CKL}) is satisfied.
By taking the complex conjugation, $(\ref{CKL})$ implies that
$$R(X,\overline{Y},Z,\overline{W})=
\overline{R(Y,\overline{X},W,\overline{Z})}=
\overline{R(W,\overline{X},Y,\overline{Z})}=
R(X,\overline{W},Z,\overline{Y})
.$$
Therefore the condition (\ref{CKL}) ensures that $R$ obeys \emph{all} the symmetries satisfied by a K\"{a}hler metric and thus the term CKL is justified.

\item
There are many \emph{non-K\"{a}hler} Hermitian metrics which are CKL (\cite[p. 1197]{YZ}). On the other hand, Yang-Zheng showed that a CKL Hermitian metric $\omega$ must be balanced, i.e., $d\omega^{n-1}=0$ (\cite[Thm 3]{YZ}). Hence CKL Hermitian metrics interpolate between K\"{a}hler and balanced metrics.
\end{enumerate}
\end{remark}
With this notion understood, our second main result is the following
\begin{theorem}\label{rick lead to uniform}
Let $(M,\omega)$ be an $n$-dimensional compact Chern-K\"{a}hler-like Hermitian manifold and $\text{Ric}_{k}(\omega)>0$ (resp. $\text{Ric}_{k}(\omega)<0$) for some $k\in\{1,\ldots,n\}$. Then $(TM,\omega)$ is uniformly RC $k$-positive (resp. uniformly RC $k$-negative) over $(M,\omega)$.
\end{theorem}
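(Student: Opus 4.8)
The plan is to reduce the statement to a pointwise curvature inequality and then to produce the required $k$-plane by a Berger-type extremal argument at each point, following the strategy used by Yang for $k=1$ and by Ni in the K\"ahler case. First, the Chern curvature tensor $R$ of $\omega$ is by definition the Chern curvature $R^{(TM,\omega)}$ of the Hermitian holomorphic bundle $(TM,\omega)$, and since $\omega$ is CKL, Remark \ref{remark} shows that $R$ obeys all the symmetries of a K\"ahler curvature tensor; in particular the pair-exchange identity $R(X,\overline{Y},Z,\overline{W})=R(Z,\overline{W},X,\overline{Y})$ holds. Hence, at any $x\in M$, for any $k$-dimensional $\Sigma\subset T_xM$ with unitary basis $\{E_1,\dots,E_k\}$ and any $u\in T_xM$,
\[
R^{(TM,\omega)}(\Sigma;u,\overline{u})=\sum_{i=1}^{k}R(E_i,\overline{E_i},u,\overline{u})=\sum_{i=1}^{k}R(u,\overline{u},E_i,\overline{E_i})=:Q_\Sigma(u),
\]
which is a Hermitian form in $u$. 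When $u\in\Sigma$, $Q_\Sigma(u)$ is precisely $\text{Ric}_k$ evaluated on the pair $(u,\Sigma)$, hence positive by hypothesis. Thus it is enough to exhibit, at each $x$, a single $k$-plane $\Sigma_0$ for which the Hermitian form $Q_{\Sigma_0}$ is positive definite on all of $T_xM$, not merely on $\Sigma_0$.

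Fix $x$ and let $\mathcal{F}_x$ be the compact manifold of pairs $(v,\Sigma)$ with $\Sigma\subset T_xM$ a $k$-plane and $v\in\Sigma$ a unit vector. The continuous function $(v,\Sigma)\mapsto\text{Ric}_k(v;\Sigma)=Q_\Sigma(v)$ is strictly positive on $\mathcal{F}_x$, so it attains a minimum $2\delta>0$ at some $(v_0,\Sigma_0)$; fix a unitary basis $E_1=v_0,E_2,\dots,E_k$ of $\Sigma_0$ and extend to a unitary frame of $T_xM$. I claim $\Sigma_0$ works. Since every unit $v\in\Sigma_0$ gives $(v,\Sigma_0)\in\mathcal{F}_x$, we already have $Q_{\Sigma_0}\geq 2\delta$ on $\Sigma_0$; decomposing $T_xM=\Sigma_0\oplus\Sigma_0^\perp$, it therefore remains to bound $Q_{\Sigma_0}$ from below on $\Sigma_0^\perp$ and to control the off-diagonal block of $Q_{\Sigma_0}$ between $\Sigma_0$ and $\Sigma_0^\perp$. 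For this we exploit the extremality of $(v_0,\Sigma_0)$: given a unit $w\in\Sigma_0^\perp$, the curves $v(t)=(v_0+tw)/\sqrt{1+t^2}$ and $\Sigma(t)=\mathrm{span}\{v(t),E_2,\dots,E_k\}$ form a smooth path in $\mathcal{F}_x$ through $(v_0,\Sigma_0)$ (orthonormality is preserved because $w\perp\Sigma_0$), so $g(t):=\text{Ric}_k(v(t);\Sigma(t))=R(v(t),\overline{v(t)},v(t),\overline{v(t)})+\sum_{\alpha=2}^{k}R(v(t),\overline{v(t)},E_\alpha,\overline{E_\alpha})$ satisfies $g'(0)=0$ and $g''(0)\geq 0$.

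Expanding $g$ to second order in $t$, using the full set of K\"ahler symmetries of $R$, and re-running the computation with $w$ replaced by $e^{\sqrt{-1}\theta}w$ for a suitable $\theta$ (to eliminate the terms of the shape $R(w,\overline{v_0},w,\overline{v_0})$), the inequality $g''(0)\geq 0$ becomes a lower bound of the form $Q_{\Sigma_0}(w)\geq 2\delta+R(v_0,\overline{v_0},v_0,\overline{v_0})-3R(v_0,\overline{v_0},w,\overline{w})$; the first-order identities $g'(0)=0$ (together with variations rotating the other $E_\alpha$) pin down the mixed curvature quantities such as $R(v_0,\overline{v_0},w,\overline{E_\alpha})$, and applying the hypothesis $\text{Ric}_k\geq 2\delta$ to the auxiliary $k$-planes $\mathrm{span}\{w,E_2,\dots,E_k\}$ and $\Sigma_0$ furnishes the complementary bound $Q_{\Sigma_0}(w)\geq 2\delta+R(v_0,\overline{v_0},w,\overline{w})-R(w,\overline{w},w,\overline{w})$. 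Combining these two bounds (and, if needed, further variations in $\mathcal{F}_x$ to estimate the holomorphic sectional curvatures $R(v_0,\overline{v_0},v_0,\overline{v_0})$ and $R(w,\overline{w},w,\overline{w})$) yields $Q_{\Sigma_0}(w)>0$ and the desired off-diagonal control, whence $Q_{\Sigma_0}>0$ on $T_xM$; since $x$ was arbitrary, $(TM,\omega)$ is uniformly RC $k$-positive. The crux, and the main obstacle, is exactly this last step: writing $g''(0)$ explicitly as $Q_{\Sigma_0}(w)$ plus correction terms and showing, by juggling the K\"ahler symmetries, the first-order conditions, and $\text{Ric}_k>0$ on the auxiliary planes, that the corrections cannot overwhelm the constant $2\delta$.

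Finally, the negative case follows by reversing every inequality above: when $\text{Ric}_k(\omega)<0$ one takes $(v_0,\Sigma_0)$ to realise the maximum (a negative number $-2\delta$) of $\text{Ric}_k$ over $\mathcal{F}_x$, and the identical second-variation computation with $g''(0)\leq 0$ gives $Q_{\Sigma_0}<0$ on $T_xM$, i.e.\ $(TM,\omega)$ is uniformly RC $k$-negative (equivalently, by the remark following Definition \ref{uniform rc k positivity}, $(T^\ast M,\omega^\ast)$ is uniformly RC $k$-positive).
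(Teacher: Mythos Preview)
Your outline follows the right broad strategy (a Berger-type extremal argument at each point), but the specific variational problem you chose---minimizing $\text{Ric}_k(v;\Sigma)$ over pairs $(v,\Sigma)$---does not carry enough information, and the resulting argument has a genuine gap that you yourself flag as ``the crux, and the main obstacle.''

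Concretely, there are two problems. First, the off-diagonal block of $Q_{\Sigma_0}$ is not controlled. To show $Q_{\Sigma_0}>0$ on $T_xM$ you need $\sum_{i=1}^k R(E_i,\overline{E_i},Y,\overline{Z})$ to vanish (or be small) for \emph{every} $Y\in\Sigma_0$ and $Z\in\Sigma_0^\perp$. Your first-order conditions at the single minimizer $(v_0,\Sigma_0)$ and the auxiliary rotations of the $E_\alpha$ only yield identities involving $v_0$ in a distinguished slot (e.g.\ $R(v_0,\overline{v_0},w,\overline{E_\alpha})=0$); they say nothing about $R(E_\alpha,\overline{E_\alpha},E_\beta,\overline{w})$ for $\alpha,\beta\ge2$, which also enter the off-diagonal block. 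Second, even granting your two claimed inequalities for $Q_{\Sigma_0}(w)$, no linear combination of them is positive without a priori control on $H(v_0)$, $H(w)$, and $R(v_0,\overline{v_0},w,\overline{w})$---quantities which $\text{Ric}_k>0$ does not bound individually. The hand-wave ``further variations \ldots\ to estimate the holomorphic sectional curvatures'' is exactly the missing content.

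The fix, and what the paper does, is to change the functional: minimize the $k$-scalar curvature $S_k(x,\Sigma)=\sum_{i,j=1}^k R(E_i,\overline{E_i},E_j,\overline{E_j})$ over $\Sigma\in\mathbb{G}_k(T_xM)$ alone. Because $S_k$ is already an average over the sphere in $\Sigma$, its first variation under $e^{ta}\in U(n)$ with $a$ swapping a $Y\in\Sigma$ and a $Z\in\Sigma^\perp$ gives \emph{exactly} the vanishing of the full off-diagonal block $\sum_i R(E_i,\overline{E_i},Y,\overline{Z})=0$ (this is the Ni--Zheng Lemma). Its second variation then yields the clean lower bound $\sum_i R(E_i,\overline{E_i},Z,\overline{Z})\ge \frac{1}{k+1}S_k(x,\Sigma)\,|Z|^2$ for all $Z\in\Sigma^\perp$, with no residual sectional-curvature terms. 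Decomposing $X=X_1+X_2\in\Sigma\oplus\Sigma^\perp$, one gets $Q_\Sigma(X)=\text{Ric}_k(x,\Sigma)(X_1,\overline{X_1})+\sum_i R(E_i,\overline{E_i},X_2,\overline{X_2})\ge D|X_1|^2+\frac{kD}{k+1}|X_2|^2$, where $D>0$ is the minimum of $\text{Ric}_k$. The point is that $S_k$, being the trace of $Q_\Sigma$ over $\Sigma$, is the functional whose criticality simultaneously controls \emph{all} directions in $\Sigma$, whereas your pair functional privileges a single vector $v_0$.
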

Combining Corollary \ref{corollary} with Theorem \ref{rick lead to uniform}, it yields the following desired vanishing theorem, which extends \cite[Thm 1.5]{Li21} from the case $\text{Ric}_1$ to arbitrary $\text{Ric}_k$.
\begin{theorem}\label{CKL result}
Let $(M,\omega)$ be an $n$-dimensional compact Chern-K\"{a}hler-like Hermitian manifold.
\begin{enumerate}
\item
If $\text{Ric}_k(\omega)>0$ for some $k\in\{1,\ldots,n\}$,  then there exists a positive constant $C=C(\omega)$ such that $\Gamma^p_q(M)=0$ when $q>C\cdot p$, and consequently $\Gamma^0_q(M)=0$ when $q\geq 1$. In particular the Hodge numbers $h^{q,0}(M)=0$ when $1\leq q\leq n$.

\item
If $\text{Ric}_k(\omega)<0$ for some $k\in\{1,\ldots,n\}$, then there exists a positive constant $C=C(\omega)$ such that $\Gamma^p_q(M)=0$ when $p>C\cdot q$, and consequently $\Gamma^p_0(M)=0$ when $p\geq 1$.
\end{enumerate}
\end{theorem}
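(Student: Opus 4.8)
The plan is to obtain Theorem \ref{CKL result} as a formal consequence of Theorem \ref{rick lead to uniform} together with Corollary \ref{corollary}, which between them already carry all the analytic content. For part (1), assuming $\text{Ric}_k(\omega)>0$ for some $k\in\{1,\dots,n\}$, Theorem \ref{rick lead to uniform} asserts that $(TM,\omega)$ is uniformly RC $k$-positive over $(M,\omega)$; feeding this into Corollary \ref{corollary} (the ``former'' alternative there, applied with $E=TM$) produces a positive constant $C=C(\omega)$ such that $\Gamma^p_q(M)=0$ whenever $q>C\cdot p$. Specializing to $p=0$ then gives $\Gamma^0_q(M)=0$ for every $q\geq 1$. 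Part (2) is the exact mirror image: $\text{Ric}_k(\omega)<0$ yields, via Theorem \ref{rick lead to uniform}, that $(TM,\omega)$ is uniformly RC $k$-negative, so Corollary \ref{corollary} (the ``latter'' alternative) furnishes a positive constant $C=C(\omega)$ with $\Gamma^p_q(M)=0$ when $p>C\cdot q$, and setting $q=0$ gives $\Gamma^p_0(M)=0$ for $p\geq 1$.

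The one point that is not a pure citation is the passage from $\Gamma^0_q(M)=0$ to the vanishing of the Hodge numbers $h^{q,0}(M)$ for $1\leq q\leq n$. Here I would invoke Dolbeault's theorem, which identifies $h^{q,0}(M)=\dim H^0(M,\Omega^q_M)$ on any compact complex manifold (no K\"{a}hler hypothesis is needed, since there are no $(q,-1)$-forms to quotient by), together with the observation that over a field of characteristic zero the antisymmetrization idempotent on $(T^{\ast}M)^{\otimes q}$ realizes $\Omega^q_M=\wedge^q T^{\ast}M$ as a holomorphic direct summand, in particular a holomorphic subbundle, of $(T^{\ast}M)^{\otimes q}$. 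Hence $H^0(M,\Omega^q_M)$ injects into $H^0\big(M,(T^{\ast}M)^{\otimes q}\big)=\Gamma^0_q(M)=0$, so $h^{q,0}(M)=0$ for $1\leq q\leq n$; for $q>n$ the vanishing is automatic because $\Omega^q_M=0$.

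Since all the substance is imported, there is essentially no obstacle left at this level; the genuine difficulties lie upstream, in the curvature identity powering Theorem \ref{rick lead to uniform} (turning a positive, resp. negative, $k$-Ricci curvature of a Chern-K\"{a}hler-like metric into the uniform RC $k$-positivity, resp. negativity, of the Chern connection on $TM$) and in the Bochner-type maximum-principle argument behind Theorem \ref{RC vanishing theorem}, hence Corollary \ref{corollary}. If I were to worry about anything in the present step, it would only be bookkeeping: checking that a single constant $C$ can be chosen so that both the general statement for all $p,q$ and its $p=0$ (resp. $q=0$) specialization are covered simultaneously, which is immediate from the linear shape of the inequality in Corollary \ref{corollary}.
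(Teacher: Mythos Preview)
Your proposal is correct and follows exactly the paper's approach: the paper states Theorem \ref{CKL result} immediately after Theorem \ref{rick lead to uniform} with the one-line justification ``Combining Corollary \ref{corollary} with Theorem \ref{rick lead to uniform}, it yields the following desired vanishing theorem,'' and gives no further proof. Your additional paragraph unpacking why $\Gamma^0_q(M)=0$ forces $h^{q,0}(M)=0$ via the antisymmetrization embedding $\Omega^q_M\hookrightarrow (T^{\ast}M)^{\otimes q}$ is a correct elaboration of a step the paper leaves implicit (already folded into the statement of Corollary \ref{corollary}).
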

\begin{remark}
When $\omega$ is K\"{a}hler, part $(2)$ in Theorem \ref{CKL result} follows from Theorem \ref{Bochner} and the results due to Wu-Yau et al. and Chu-Lee-Tam, as mentioned above. Nevertheless, if the CKL metric $\omega$ is \emph{non-K\"{a}hler}, the results in part $(2)$ are still new.
\end{remark}

Our second major application is a differential-geometric criterion for the rational connectedness of compact K\"{a}hler manifolds. The following nice criterion for rational connectedness was established in \cite[Thm 1.1]{CDP}, on which both \cite[Thm 1.3]{Yang20} and \cite[Thm 1.1]{Ni21-2} are based.
\begin{theorem}[Campana-Demailly-Peternell]\label{CDP}
Let $M$ be a projective manifold. Then $M$ is rationally connected if and only if for any ample line bundle $L$ on $M$, there exists a positive constant $C=C(L)$ such that
$$H^0\big(M,(T^{\ast}M)^{\otimes q}\otimes L^{\otimes m}\big)=0$$
for $q,m\in\mathbb{Z}_{\geq 0}$ with $q> C\cdot m$.
\end{theorem}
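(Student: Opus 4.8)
The plan is to prove the two implications separately: the ``only if'' direction is soft, obtained by restricting sections to rational curves, while the ``if'' direction rests on the structure theory of the maximal rationally connected (MRC) fibration.

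\emph{Rational connectedness implies the vanishing.} If $M$ is rationally connected, then by Koll\'{a}r-Miyaoka-Mori there is an irreducible family $\mathcal{F}$ of rational curves whose members cover a Zariski-dense open subset of $M$ and whose general member $f\colon\mathbb{P}^{1}\to M$ is very free, that is $f^{*}TM=\bigoplus_{i}\mathcal{O}_{\mathbb{P}^{1}}(a_{i})$ with all $a_{i}\ge 1$. Set $d:=L\cdot f_{*}[\mathbb{P}^{1}]>0$, which is constant over $\mathcal{F}$, and take $C:=d$. Every line-bundle summand of $f^{*}\bigl((T^{*}M)^{\otimes q}\otimes L^{\otimes m}\bigr)$ then has degree at most $-q+dm<0$ whenever $q>C\cdot m$, so $f^{*}$ of any global section of $(T^{*}M)^{\otimes q}\otimes L^{\otimes m}$ vanishes on $\mathbb{P}^{1}$, hence the section itself vanishes along $f(\mathbb{P}^{1})$. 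Letting $f$ vary over $\mathcal{F}$, the section vanishes on a dense subset of $M$, so it is zero.

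\emph{The vanishing implies rational connectedness.} I argue by contraposition. Assuming $M$ is not rationally connected, I will show that for \emph{every} ample $L$ on $M$ and every $C>0$ there is a pair $(q,m)$ with $q>C\cdot m$ and $H^{0}\bigl(M,(T^{*}M)^{\otimes q}\otimes L^{\otimes m}\bigr)\ne 0$, contradicting the hypothesis. Fix a birational morphism $\mu\colon M'\to M$ from a smooth projective $M'$ together with a morphism $\phi\colon M'\to Z$ onto a smooth projective $Z$ realizing the MRC fibration; then $p:=\dim Z\ge 1$ because $M$ is not rationally connected, and $Z$ is \emph{not uniruled} by Graber-Harris-Starr. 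By Boucksom-Demailly-P\u{a}un-Peternell, $K_{Z}$ is pseudo-effective, so for any ample $A$ on $Z$ and any integer $\ell\ge 1$ the class $\ell K_{Z}+A$ is big and hence $H^{0}\bigl(Z,K_{Z}^{\otimes N\ell}\otimes A^{\otimes N}\bigr)\ne 0$ for $N\gg 0$. Using the antisymmetrization inclusion $K_{Z}=\wedge^{p}\Omega^{1}_{Z}\hookrightarrow(\Omega^{1}_{Z})^{\otimes p}$ and then the inclusion $\phi^{*}\Omega^{1}_{Z}\hookrightarrow\Omega^{1}_{M'}$, such a section produces a nonzero section of $(\Omega^{1}_{M'})^{\otimes pN\ell}\otimes\phi^{*}A^{\otimes N}$.

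Finally this must be pushed down to $M$. Choose an ample $L'$ on $M'$; since $\mu^{*}L$ is big there is a fixed integer $j$, independent of $N$ and $\ell$, with $\bigl(\phi^{*}A\otimes L'\bigr)^{\otimes N}\hookrightarrow\mu^{*}L^{\otimes jN}$, giving a nonzero section of $(\Omega^{1}_{M'})^{\otimes pN\ell}\otimes\mu^{*}L^{\otimes jN}$. Its pushforward along $\mu$, via the projection formula, lies in $H^{0}\bigl(M,\mu_{*}((\Omega^{1}_{M'})^{\otimes pN\ell})\otimes L^{\otimes jN}\bigr)$; restricting to the dense open $U\subset M$ over which $\mu$ is an isomorphism---whose complement has codimension at least $2$ since $M$ is smooth---and extending across $M\setminus U$ by reflexivity, we obtain a nonzero section of $(T^{*}M)^{\otimes q}\otimes L^{\otimes m}$ with $q=pN\ell$ and $m=jN$, so that $q/m=p\ell/j$. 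Choosing $\ell$ large makes $q/m>C$, the required contradiction.

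\emph{Main obstacle.} The ``only if'' part is routine. The content is in the converse, which relies on two deep external results---the base of the MRC fibration is not uniruled (Graber-Harris-Starr), and a non-uniruled projective manifold has pseudo-effective canonical bundle (Boucksom-Demailly-P\u{a}un-Peternell)---and then on two delicate points: first, the parameter $\ell$ supplied by the pseudo-effectivity of $K_{Z}$ is precisely what drives the ratio $q/m$ above any prescribed $C$; second, descending tensor powers of $\Omega^{1}_{M'}$ to $M$ requires pushing forward rather than pulling back (since $\mu$ induces $\mu^{*}\Omega^{1}_{M}\to\Omega^{1}_{M'}$, the wrong way) and then extending the resulting tensor field across the codimension-$\ge 2$ exceptional locus.
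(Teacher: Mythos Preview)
The paper does not prove this theorem at all: it is quoted as \cite[Thm 1.1]{CDP} and used as a black box in the proof of Theorem \ref{rational connectedness}. So there is nothing to compare against, and your write-up is in fact supplying what the paper omits.

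Your argument is essentially the standard one and is correct. The ``only if'' direction is fine as written. For the converse, the skeleton---MRC fibration, Graber--Harris--Starr to get the base $Z$ non-uniruled, Boucksom--Demailly--P\u{a}un--Peternell to get $K_Z$ pseudo-effective, then the chain of inclusions $K_Z\hookrightarrow(\Omega^1_Z)^{\otimes p}$ and $\phi^{*}\Omega^1_Z\hookrightarrow\Omega^1_{M'}$, followed by descent along $\mu$ using Hartogs across the codimension-$\ge 2$ indeterminacy locus---is exactly the Campana--Demailly--Peternell proof. One small wrinkle: the auxiliary ample $L'$ you introduce plays no role and creates a mismatch, since you only possess a section of $(\Omega^1_{M'})^{\otimes pN\ell}\otimes\phi^{*}A^{\otimes N}$, not of $(\Omega^1_{M'})^{\otimes pN\ell}\otimes(\phi^{*}A\otimes L')^{\otimes N}$. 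Simply drop $L'$: bigness of $\mu^{*}L$ alone already gives an inclusion $\phi^{*}A\hookrightarrow\mu^{*}L^{\otimes j}$ for some fixed $j$ (write $m\mu^{*}L=A''+E$ with $A''$ ample, $E$ effective, then choose $k$ with $kA''-\phi^{*}A$ effective), and tensoring this inclusion with itself $N$ times yields $\phi^{*}A^{\otimes N}\hookrightarrow\mu^{*}L^{\otimes jN}$, which is what you actually need.
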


We now have the following criterion for projectivity and rational connectedness.
\begin{theorem}\label{rational connectedness}
Let $M$ be an $n$-dimensional compact K\"{a}hler manifold. Then it is projective and rationally connected (and hence simply-connected) provided one of the following two conditions holds true.
\begin{enumerate}
\item
There exists a (possibly different) Hermitian metric $\omega$ such that
$(TM,\omega)$ is uniformly RC $k$-positive over $(M,\omega)$ for some $k\in\{1,\ldots,n\}$.

\item
$M$ has a Chern-K\"{a}hler-like Hermitian metric $\omega$ with $\text{Ric}_k(\omega)>0$ for some $k\in\{1,\ldots,n\}$.
\end{enumerate}
\end{theorem}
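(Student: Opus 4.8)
The plan is to reduce everything to the vanishing Theorem \ref{RC vanishing theorem} and the Campana--Demailly--Peternell criterion (Theorem \ref{CDP}), together with the standard facts that $h^{2,0}=0$ forces projectivity of a compact K\"{a}hler manifold and that a rationally connected projective manifold is simply-connected. First I would dispose of condition (2): by Theorem \ref{rick lead to uniform}, a compact Chern-K\"{a}hler-like Hermitian manifold with $\text{Ric}_k(\omega)>0$ has $(TM,\omega)$ uniformly RC $k$-positive over $(M,\omega)$, so condition (2) implies condition (1) with the same $\omega$ and the same $k$. It therefore suffices to treat condition (1), and I assume henceforth that $(TM,\omega)$ is uniformly RC $k$-positive over $(M,\omega)$ for some $k\in\{1,\ldots,n\}$.

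For projectivity I would apply Corollary \ref{corollary} (equivalently, Theorem \ref{RC vanishing theorem} with $E=TM$, $F=\mathcal{O}_M$ and $p=m=0$) to get $\Gamma^0_q(M)=H^0\big(M,(T^{\ast}M)^{\otimes q}\big)=0$ for all $q\geq 1$. Since $\Omega^q_M=\wedge^q T^{\ast}M$ embeds by antisymmetrization into $(T^{\ast}M)^{\otimes q}$, this yields $h^{q,0}(M)=0$ for $1\leq q\leq n$, and in particular $h^{2,0}(M)=0$; hence the compact K\"{a}hler manifold $M$ is projective (\cite[p. 143]{MK71}).

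For rational connectedness I would fix an arbitrary ample line bundle $L$ on $M$ and apply Theorem \ref{RC vanishing theorem} with $E=TM$ and $F=L$: there exist positive constants $C_1=C_1(\omega)$ and $C_2=C_2(L,\omega)$ such that $H^0\big(M,(TM)^{\otimes p}\otimes(T^{\ast}M)^{\otimes q}\otimes L^{\otimes m}\big)=0$ whenever $q>C_1 p+C_2 m$. Taking $p=0$ gives $H^0\big(M,(T^{\ast}M)^{\otimes q}\otimes L^{\otimes m}\big)=0$ for $q>C_2 m$, which is exactly the hypothesis of Theorem \ref{CDP} with $C(L):=C_2$. Hence $M$ is rationally connected, and so simply-connected by \cite[Coro. 4.18]{Deb}.

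I do not expect a genuine obstacle here: the argument is the assembly of results already established, with the analytic weight carried by Theorem \ref{RC vanishing theorem}, the implication in Theorem \ref{rick lead to uniform}, and the CDP characterization. The one point worth checking is that the constant $C_2$ produced by Theorem \ref{RC vanishing theorem} depends only on $L$ and the fixed background metric $\omega$, so that it legitimately serves as the constant $C(L)$ demanded by Theorem \ref{CDP}; this is precisely the uniformity already built into the statement of Theorem \ref{RC vanishing theorem}. Note also that ampleness of $L$ is never used in the vanishing step — Theorem \ref{RC vanishing theorem} allows $F$ to be an arbitrary holomorphic vector bundle — and enters only through the formulation of Theorem \ref{CDP}, so no compatibility between $L$ and $\omega$ need be arranged.
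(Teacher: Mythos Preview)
Your proposal is correct and follows essentially the same approach as the paper: reduce condition (2) to condition (1) via Theorem \ref{rick lead to uniform}, use Corollary \ref{corollary} (i.e.\ Theorem \ref{RC vanishing theorem} with $E=TM$, $p=m=0$) to get $h^{2,0}=0$ and hence projectivity, and then apply Theorem \ref{RC vanishing theorem} with $E=TM$, $F=L$, $p=0$ to feed the Campana--Demailly--Peternell criterion. Your additional remarks on the dependence of $C_2$ and the irrelevance of ampleness in the vanishing step are accurate and helpful, but not needed for the argument.
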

\begin{proof}
By Corollary \ref{corollary} and Theorem \ref{CKL result}, either condition implies that the Hodge number $h^{2,0}=0$ and hence the manifold is projective. Since either condition also implies that $(TM,\omega)$ is uniformly RC $k$-positive over $(M,\omega)$ and hence the condition in Theorem \ref{CDP} is satisfied by taking $E=TM$, $p=0$ and $F=L$ in (\ref{RC vanishing formula}).
\end{proof}
\begin{remark}
\begin{enumerate}
\item
Part $(1)$ in Theorem \ref{rational connectedness} extends \cite[Thm 1.3]{Yang20} from the case of $k=1$ to arbitrary $k$. Part $(2)$ technically improves on \cite[Thm 1.1]{Ni21-2} from K\"{a}hler metrics to Chern-K\"{a}hler-like Hermitian metrics.

\item
The Hodge number $h^{2,0}=0$ and hence projectivity can be derived via a somewhat weaker \emph{BC $2$-positivity} by Ni (\cite[p. 280-281]{Ni21-2} (see Definition \ref{uniform RC kl definition} and Remark \ref{uniform RC remark} for more details), and the proof in \cite[Thm 4.6]{Ni21-2} can be adopted to give an alternative one of \cite[Thm 1.3]{Yang20} (cf. \cite[p. 285]{Ni21-2}).
\end{enumerate}
\end{remark}

The rest of this article is organized as follows. Some necessary background materials are collected in Section \ref{Preliminaries}. Sections \ref{proof of rc vanishing} and \ref{proof of theorem rick yield rc} are devoted to the proofs of Theorems \ref{RC vanishing theorem} and \ref{rick lead to uniform} respectively. In Section \ref{further remarks} some related questions and remarks shall be discussed, in which various positivity notions are proposed and their relations are briefly discussed for possible further study in the future.

\section{Background materials}\label{Preliminaries}
We collect in this section some basic facts on Hermitian holomorphic vector bundles and Hermitian and K\"{a}hler manifolds in the form we shall use to prove our main results. A thorough treatment can be found in \cite{Ko87} and \cite{Zh}.

\subsection{Hermitian holomorphic vector bundles}
Let $(E,h)\rightarrow M$ be a Hermitian holomorphic vector bundle of rank $r$ over an $n$-dimensional complex manifold $M$ with canonical Chern connection $\nabla$. The Chern curvature tensor $$R=R^{(E,h)}:=\nabla^2\in\Gamma(\Lambda^{1,1}M\otimes E^{\ast}\otimes E)$$
Here and throughout this article $\Gamma(\cdot)$ is used to denote the space of \emph{smooth} sections for holomorphic vector bundles and the notation $\Gamma^p_q(M)$ in (\ref{holomorphic tensor field}) is reserved throughout this article to denote the space of $(p,q)$-type \emph{holomorphic} tensor fields on $M$.

Take a local frame field $\{s_1,\ldots,s_r\}$ of $E$, whose dual coframe field is denoted by $\{s_1^{\ast},\ldots,s_r^{\ast}\}$, and a local holomorphic coordinates $\{z^1,\ldots,z^n\}$ on $M$. With the Einstein summation convention adopted here and in what follows, the Chern curvature tensor $R$ and the Hermitian metric $h$ can be written as
\begin{eqnarray}\label{curvature tensor}
\left\{ \begin{array}{ll}
\displaystyle R=:\Omega^{\beta}_{\alpha}s^{\ast}_{\alpha}\otimes s_{\beta}=:R^{\beta}_{i\bar{j}\alpha}dz^i\wedge d\bar{z}^j\otimes s^{\ast}_{\alpha}\otimes s_{\beta},\\
~\\
\displaystyle h=(h_{\alpha\bar{\beta}}):=\big(h(s_{\alpha},s_{\beta})\big),\\
~\\
\displaystyle R_{ij\alpha\bar{\beta}}:=R_{ij\alpha}^{\gamma}h_{\gamma\bar{\beta}}.
\end{array} \right.\nonumber
\end{eqnarray}
The Hermitian metric $h(\cdot,\cdot)$ and the induced metrics on various vector bundles arising naturally from $E$ are sometimes denoted by $<\cdot,\cdot>$.

For $u=u^{\alpha}s_{\alpha}\in\Gamma(E)$, $v=v^{\alpha}s_{\alpha}\in\Gamma(E)$, $X=X^i\frac{\partial}{\partial z^i}$, and $Y= Y^i\frac{\partial}{\partial z^i}$, we have
$$R(u)=(\Omega^{\beta}_{\alpha}s^{\ast}_{\alpha}\otimes s_{\beta})(u^{\gamma}s_{\gamma})=
\Omega^{\beta}_{\alpha}u^{\alpha}s_{\beta}
\in\Gamma(\Lambda^{1,1}M\otimes E),$$

$$R_{X\overline{Y}}(u)=\Omega^{\beta}_{\alpha}
(X,\overline{Y})u^{\alpha}s_{\beta}=
R^{\beta}_{i\bar{j}\alpha}X^i\overline{Y^j}u^{\alpha}s_{\beta}
\in\Gamma(E),$$
and therefore,
\be\label{Hermitian formula}
\begin{split}
R(X,\overline{Y},u,\overline{v}):=&
<R_{X\overline{Y}}(u),v>\\
=&
<R^{\beta}_{i\bar{j}\alpha}X^i\overline{Y^j}u^{\alpha}s_{\beta},
v^{\gamma}s_{\gamma}>\\
=&R^{\beta}_{i\bar{j}\alpha}X^i\overline{Y^j}u^{\alpha}\overline{v^{\gamma}}h_{\beta\bar{\gamma}}\\
=&R_{i\bar{j}\alpha\bar{\beta}}X^i\overline{Y^j}u^{\alpha}
\overline{v^{\beta}}.\end{split}
\ee
Here and in what follows we always use capital letters to denote vectors in $TM$ and lowercase letters to denote vectors in $E$, so as to distinguish them.

The formula (\ref{Hermitian formula}) yields an easy but useful fact, which we record in the next lemma for our later reference.
\begin{lemma}\label{diagonal and real lemma}
The linear map $R_{X\overline{X}}(\cdot)$ is a Hermitian transformation: \be\label{Hermitan transformation}<R_{X\overline{X}}(u),v>=<u,R_{X\overline{X}}(v)>,\qquad\text{for any $u,v\in\Gamma(E)$}.\ee
Hence $R_{X\overline{X}}(\cdot)$ is diagonalizable and its eigenvalues are all real at any point in $M$.
\end{lemma}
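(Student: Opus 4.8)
The plan is to prove Lemma \ref{diagonal and real lemma} directly from the coordinate expression \eqref{Hermitian formula}, exploiting the Hermitian symmetry of the Chern curvature components. First I would recall that for any Chern connection the curvature form $\Omega^\beta_\alpha$ is of type $(1,1)$ and the curvature satisfies the standard Hermitian symmetry $R_{i\bar j\alpha\bar\beta}=\overline{R_{j\bar i\beta\bar\alpha}}$; this is the one structural fact I need and it follows from the compatibility of $\nabla$ with $h$ (i.e. $\bar\partial$-closedness of the metric connection), so I would either cite it from \cite{Ko87}/\cite{Zh} or note it follows by differentiating $h_{\alpha\bar\beta}$ twice.

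Next I would simply compute both sides of \eqref{Hermitan transformation} using \eqref{Hermitian formula} with $X=Y$. On the left, $<R_{X\overline{X}}(u),v>=R_{i\bar j\alpha\bar\beta}X^i\overline{X^j}u^\alpha\overline{v^\beta}$. For the right-hand side, $<u,R_{X\overline{X}}(v)>=\overline{<R_{X\overline{X}}(v),u>}=\overline{R_{i\bar j\alpha\bar\beta}X^i\overline{X^j}v^\alpha\overline{u^\beta}}=\overline{R_{i\bar j\alpha\bar\beta}}\,\overline{X^i}X^j\overline{v^\alpha}u^\beta$. Relabelling the dummy indices $i\leftrightarrow j$ and $\alpha\leftrightarrow\beta$ turns this into $\overline{R_{j\bar i\beta\bar\alpha}}\,X^i\overline{X^j}u^\alpha\overline{v^\beta}$, and the Hermitian symmetry $\overline{R_{j\bar i\beta\bar\alpha}}=R_{i\bar j\alpha\bar\beta}$ gives exactly the left-hand side. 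Hence $R_{X\overline{X}}(\cdot)$ is self-adjoint with respect to $h$.

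Finally, self-adjointness of a linear operator on a finite-dimensional Hermitian inner product space immediately gives, by the spectral theorem applied fibrewise at each point $x\in M$, that $R_{X\overline{X}}(\cdot)$ is diagonalizable with an orthonormal eigenbasis and that all its eigenvalues are real. This completes the lemma. I do not anticipate a genuine obstacle here; the only point requiring minor care is making the index bookkeeping in the conjugation step transparent, and being explicit that the Hermitian symmetry of the Chern curvature is what powers the argument rather than any Kähler-type symmetry (so the lemma holds for arbitrary Hermitian $(E,h)$, which matters since the manifolds in the paper need not be Kähler). The diagonalizability and reality assertions are then purely the standard finite-dimensional spectral theorem and need no further comment beyond the pointwise application.
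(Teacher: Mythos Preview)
Your proposal is correct and is exactly the argument the paper has in mind: the paper does not spell out a proof but simply remarks that the lemma is an easy consequence of formula \eqref{Hermitian formula}, and your computation---using the conjugate symmetry $R_{i\bar j\alpha\bar\beta}=\overline{R_{j\bar i\beta\bar\alpha}}$ of the Chern curvature together with an index relabelling---is precisely how one unpacks that remark. The only content beyond \eqref{Hermitian formula} is the standard Hermitian symmetry of the Chern curvature, which you correctly identify and source, and the fibrewise spectral theorem; there is nothing to add.
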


\subsection{The Ricci and scalar $k$-curvatures}
Let $(M,\omega)$ be an $n$-dimensional Hermitian manifold, $R=R^{(TM,\omega)}$ the Chern curvature tensor.
Denote by $T_xM$ the \emph{$(1,0)$-type} tangent space at $x\in M$ and $\{E_1,\ldots,E_n\}$ a unitary basis of $T_xM$. For $X\in T_xM$, let $H(X)$, $\text{Ric}(X,\overline{X})$ and $S(x)$ be respectively the holomorphic sectional curvature, Chern-Ricci curvature and Chern scalar curvature:
\begin{eqnarray}
\left\{ \begin{array}{ll}
\displaystyle H(X):=\frac{R(X,\overline{X},X,\overline{X})}{|X|^4},\\
~\\
\displaystyle\text{Ric}(X,\overline{X}):=\sum\limits_{i=1}^nR(X,\overline{X},E_i,\overline{E_i}),\\
~\\
\displaystyle S(x):=\sum\limits_{i,j=1}^nR(E_i,\overline{E_i},E_j,\overline{E_j}).
\end{array} \right.\nonumber
\end{eqnarray}
When $\omega$ is K\"{a}hler, $H(X)$, $\text{Ric}(X,\overline{X})$ and $S(x)$ are the usual notions of holomorphic sectional curvature, Ricci curvature and scalar curvature in K\"{a}hler geometry.

For $k\in\{1,\ldots,n\}$, pick a $k$-dimensional subspace $\Sigma\subset T_xM$ and choose a unitary basis $\{E_1,\ldots,E_k\}$ of $\Sigma$. Define
\be\label{def riccik}\text{Ric}_k(x,\Sigma)(X,\overline{X}):=\sum_{i=1}^kR(X,
\overline{X},E_i,\overline{E_i}),\qquad\text{for any $X\in\Sigma$,}\ee
and call it the \emph{$k$-Ricci curvature} of $\omega$, which was introduced by Ni in \cite{Ni21-1}.  The $k$-Ricci curvatures $\text{Ric}_k$ interpolate $H$ and $\text{Ric}$ in the sense that
$$\text{Ric}_1(x,\mathbb{C}X)(X,\overline{X})=|X|^2H(X)\quad \text{and} \quad\text{Ric}_n=\text{Ric}.$$
We say that $\text{Ric}_k(x)>0$ if $\text{Ric}_k(x,\Sigma)(X,\overline{X})>0$ for any $k$-dimensional subspace $\Sigma$ in $T_xM$ and any nonzero $X\in\Sigma$, and $\text{Ric}_k=\text{Ric}_k(\omega)>0$ if $\text{Ric}_k(x)>0$ for any $x\in M$.

Closely related to $\text{Ric}_k$ are
the \emph{$k$-scalar curvatures} $S_k=S_k(\omega)$ ($1\leq k\leq n$) introduced in \cite{NZ} by Ni-Zheng for \emph{K\"{a}hler} metrics, which is the average of $H$ on $k$-dimensional subspaces of $(1,0)$-type tangent spaces. These $\{S_k\}$ interpolate between $H$ ($k=1$) and the usual scalar curvature ($k=n$). They extended Yang's results by showing that the condition $S_k>0$ for a \emph{K\"{a}hler} metric implies that the Hodge numbers $h^{q,0}=0$ for $k\leq q\leq n$ (\cite[Thm 1.3]{NZ}). In particular, the condition $S_2>0$ is enough to gurantee the projectivity.

Here we shall define $S_k=S_k(\omega)$ for $k\in\{1,\ldots,n\}$ in the Hermitian situation, which turns out to be the same as the original one in \cite{NZ} when $\omega$ is K\"{a}hler \big(see (\ref{def integral sk})\big). As above, for $x\in M$, a $k$-dimensional subspace $\Sigma\subset T_xM$ and a unitary basis $\{E_1,\ldots,E_k\}$ of $\Sigma$, define
\be\label{def sk}S_k(x,\Sigma):=\sum_{i,j=1}^k
R(E_i,\overline{E_i},E_j,\overline{E_j}).\ee
It is obvious that in the Hermitian case these $\{S_1,\ldots,S_n\}$ interpolate between $H$ ($k=1$) and the Chern scalar curvature $S(x)$ ($k=n$). Similarly we say that $S_k(x)>0$ if $S_k(x,\Sigma)>0$ for any $k$-dimensional subspace $\Sigma$ in $T_xM$, and $S_k=S_k(\omega)>0$ if $S_k(x)>0$ for any $x\in M$.

\subsection{Integral formulas}
The trick of the proof in the next lemma is usually attributed to Berger, who first applied it to show that for \emph{K\"{a}hler} metrics the sign of $H_x(\cdot)$ determines that of $S(x)$.
\begin{lemma}\label{integral formula lemma}
Let the notation be as above and
$$f(\cdot,\cdot):~T_xM\times T_xM\rightarrow\mathbb{C},\qquad
g(\cdot,\cdot,\cdot,\cdot):~T_xM\times T_xM\times T_xM\times T_xM\rightarrow\mathbb{C}$$
be two smooth maps such that the first variable of $f$ (resp. the first and third variables of $g$) is (resp. are) linear and the second variable of $f$ (resp. the second and fourth variables of $g$) is (resp. are) conjugate-linear.
Then
\be\label{integral formula1}\int_{Y\in\Sigma,~|Y|=1}f(Y,Y)d\theta(Y)=
\frac{\text{V}(\mathbb{S}^{2k-1})}{k}\sum_{i=1}^kf(E_i,E_i),\ee
and
\be\label{integral formula2}\begin{split}&\int_{Y\in\Sigma,~|Y|=1}g(Y,Y,Y,Y)d\theta(Y)\\
=&\frac{\text{V}(\mathbb{S}^{2k-1})}{k(k+1)}\sum_{i,j=1}^k
\Big[g(E_i,E_i,E_j,E_j)+g(E_i,E_j,E_j,E_i)\Big],\end{split}\ee
where $d\theta(Y)$ is the spherical measure on $\{Y\in\Sigma,~|Y|=1\}\cong\mathbb{S}^{2k-1}$ and $\text{V}(\mathbb{S}^{2k-1})$ the volume with respect to it.
\end{lemma}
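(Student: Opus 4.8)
The goal is to prove Lemma \ref{integral formula lemma}, the Berger-type averaging identities \eqref{integral formula1} and \eqref{integral formula2}. The plan is to reduce everything to elementary computations of integrals of monomials $Y^{\alpha}\overline{Y^{\beta}}$ and $Y^{\alpha}\overline{Y^{\beta}}Y^{\gamma}\overline{Y^{\delta}}$ over the unit sphere $\{Y\in\Sigma,\ |Y|=1\}\cong\mathbb{S}^{2k-1}$, after expanding $Y$ in the chosen unitary basis. First I would write $Y=\sum_{i=1}^k Y^i E_i$ with $\sum_i |Y^i|^2=1$; the multilinearity/conjugate-linearity hypotheses on $f$ and $g$ let me pull the coordinate functions out of the maps, so that the left-hand side of \eqref{integral formula1} becomes $\sum_{i,j} f(E_i,E_j)\int_{\mathbb{S}^{2k-1}} Y^i\overline{Y^j}\,d\theta(Y)$ and similarly \eqref{integral formula2} becomes $\sum_{i,j,p,q} g(E_i,E_j,E_p,E_q)\int_{\mathbb{S}^{2k-1}} Y^i\overline{Y^j}Y^p\overline{Y^q}\,d\theta(Y)$.

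Next I would evaluate the two required sphere integrals. By the unitary invariance of the spherical measure (equivalently, by a symmetry/sign-change argument: replacing $Y^i$ by $e^{\sqrt{-1}\theta}Y^i$ in a single coordinate leaves $d\theta$ invariant), any integral of a monomial that is not invariant under these phase rotations vanishes. Hence $\int Y^i\overline{Y^j}\,d\theta=0$ for $i\neq j$ and $\int Y^i\overline{Y^j}Y^p\overline{Y^q}\,d\theta=0$ unless $\{i,p\}=\{j,q\}$ as multisets. The surviving diagonal values are then pinned down by summing the constraint $\sum_i|Y^i|^2=1$: integrating it gives $\sum_i \int |Y^i|^2\,d\theta=\mathrm{V}(\mathbb{S}^{2k-1})$, and by symmetry each term equals $\mathrm{V}(\mathbb{S}^{2k-1})/k$, which yields \eqref{integral formula1}. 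For the quartic case, integrating $\big(\sum_i |Y^i|^2\big)^2=1$ and splitting into the diagonal terms $\int |Y^i|^4$ and the off-diagonal terms $\int |Y^i|^2|Y^p|^2$ ($i\neq p$), together with the elementary relation $\int |Y^i|^4 = 2\int |Y^i|^2|Y^p|^2$ (again a standard one-variable computation, or obtainable from the $\beta$-function / known moments of $\mathbb{S}^{2k-1}$), gives each off-diagonal value $=\mathrm{V}(\mathbb{S}^{2k-1})/(k(k+1))$ and each diagonal value $=2\mathrm{V}(\mathbb{S}^{2k-1})/(k(k+1))$.

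Finally I would substitute these values back. The nonzero contributions to $\sum_{i,j,p,q} g(E_i,E_j,E_p,E_q)\int Y^i\overline{Y^j}Y^p\overline{Y^q}\,d\theta$ come from $(i,j,p,q)=(i,i,p,p)$ with $i\neq p$, from $(i,p,p,i)$ with $i\neq p$, and from $i=j=p=q$; collecting these with the weights just computed and noting that the $i=p$ case $g(E_i,E_i,E_i,E_i)$ is counted once with weight $2\mathrm{V}/(k(k+1))$ exactly matches the $i=j$ diagonal of the combined sum $\sum_{i,j}[g(E_i,E_i,E_j,E_j)+g(E_i,E_j,E_j,E_i)]$, I obtain precisely the right-hand side of \eqref{integral formula2}. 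The computation for \eqref{integral formula1} is immediate once the diagonal moment is known. I do not expect any genuine obstacle here; the only points requiring care are bookkeeping the multi-index cases in the quartic identity (in particular making sure the diagonal $i=j$ term and the double count are handled consistently) and correctly recording the moment $\int_{\mathbb{S}^{2k-1}}|Y^1|^2\,d\theta$ and $\int_{\mathbb{S}^{2k-1}}|Y^1|^4\,d\theta$, which I would extract from standard sphere-integral formulas rather than rederive from scratch.
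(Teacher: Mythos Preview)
Your proposal is correct and follows essentially the same approach as the paper: expand $Y=\sum_i Y^iE_i$, use the (conjugate-)linearity hypotheses to reduce both integrals to the moment identities $\int Y^i\overline{Y^j}\,d\theta=\tfrac{\delta_{ij}}{k}\mathrm{V}(\mathbb{S}^{2k-1})$ and $\int Y^i\overline{Y^j}Y^r\overline{Y^s}\,d\theta=\tfrac{\delta_{ij}\delta_{rs}+\delta_{is}\delta_{rj}}{k(k+1)}\mathrm{V}(\mathbb{S}^{2k-1})$, and substitute. The only difference is cosmetic: the paper simply quotes these two moment identities as classical, whereas you sketch their derivation via phase-rotation invariance and integrating the constraints $\sum_i|Y^i|^2=1$ and its square.
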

\begin{proof}
Let $Y=\sum_{i=1}^kY^iE_i\in\Sigma$ and recall the following two classical identities on spherical measure:
\be\label{integral over sphere1}\int_{Y\in\Sigma,~|Y|=1}Y^i\overline{Y^j}d\theta(Y)=
\frac{\delta_{ij}}{k}\cdot\text{V}(\mathbb{S}^{2k-1}),\ee
and
\be\label{integral over sphere2}\int_{Y\in\Sigma,~|Y|=1}Y^i\overline{Y^j}Y^r\overline{Y^s}d\theta(Y)
=\frac{\delta_{ij}\delta_{rs}+\delta_{is}\delta_{rj}}{k(k+1)}
\cdot\text{V}(\mathbb{S}^{2k-1}),
\ee
where $\delta_{ij}$ is the Kronecker delta.

Then
\be\begin{split}\int_{Y\in\Sigma,~|Y|=1}f(Y,Y)d\theta(Y)&=
\int_{Y\in\Sigma,~|Y|=1}f(Y^iE_i,Y^jE_j)d\theta(Y)\\
&=f(E_i,E_j)
\int_{Y\in\Sigma,~|Y|=1}Y^i\overline{Y^j}d\theta(Y)\\
&=\frac{\text{V}(\mathbb{S}^{2k-1})}{k}
\sum_{i=1}^kf(E_i,E_i),\qquad\big(\text{by (\ref{integral over sphere1})}\big)\end{split}\nonumber\ee
and
\be\begin{split}&\int_{Y\in\Sigma,~|Y|=1}
g(Y,Y,Y,Y)d\theta(Y)\\
=&
\int_{Y\in\Sigma,~|Y|=1}
g(Y^iE_i,Y^jE_j,Y^rE_r,Y^sE_s)d\theta(Y)\\
=&g(E_i,E_j,E_r,E_s)
\int_{Y\in\Sigma,~|Y|=1}Y^i\overline{Y^j}Y^r\overline{Y^s}d\theta(Y)\\
=&\frac{\text{V}(\mathbb{S}^{2k-1})}{k(k+1)}
\sum_{i,j=1}^k\Big[g(E_i,E_i,E_j,E_j)
+g(E_i,E_j,E_j,E_i)\Big].\qquad\big(\text{by (\ref{integral over sphere2})}\big)
\end{split}\nonumber\ee
\end{proof}

Applying Lemma \ref{integral formula lemma} to (\ref{RC-positivity}), (\ref{def riccik}) and (\ref{def sk}) produces the following alternative definitions as integrals over the unit sphere in $\Sigma$.
\begin{corollary}\label{relations riccik and sk}
Let the notation be as above. We have
\be\label{def integral uniform}R^{(E,h)}(\Sigma;u,\overline{u})=
\frac{k}{\text{V}(\mathbb{S}^{2k-1})}
\int_{Y\in\Sigma,~|Y|=1}R^{(E,h)}(Y,\overline{Y},u,\overline{u})d\theta(Y),\ee
\be\label{def integral riccik}\text{Ric}_k(x,\Sigma)(X,\overline{X})=
\frac{k}{\text{V}(\mathbb{S}^{2k-1})}\int_{Y\in\Sigma,~|Y|=1}R(X,\overline{X},Y,\overline{Y})d\theta(Y),\ee
and
\be\label{def integral sk} S_k(x,\Sigma)=\frac{k(k+1)}{2\text{V}(\mathbb{S}^{2k-1})}\int_{Y\in\Sigma,~|Y|=1}H(Y)d\theta(Y),\qquad\text{when $\omega$ is CKL}.\ee
\end{corollary}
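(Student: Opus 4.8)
The plan is to apply Lemma \ref{integral formula lemma} three times, with appropriate choices of the auxiliary maps $f$ and $g$, and in the third case to collapse the resulting double sum using the Chern-K\"{a}hler-like symmetries.

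For (\ref{def integral uniform}) I would fix $x\in M$, a $k$-dimensional subspace $\Sigma\subset T_xM$ with unitary basis $\{E_1,\ldots,E_k\}$, and a nonzero $u\in E_x$, and set $f(Y_1,Y_2):=R^{(E,h)}(Y_1,\overline{Y_2},u,\overline{u})$ on $T_xM\times T_xM$. By (\ref{Hermitian formula}) this map is linear in $Y_1$ and conjugate-linear in $Y_2$, so Lemma \ref{integral formula lemma} applies. Since $f(Y,Y)=R^{(E,h)}(Y,\overline{Y},u,\overline{u})$ and, by Definition \ref{uniform rc k positivity}, $\sum_{i=1}^k f(E_i,E_i)=R^{(E,h)}(\Sigma;u,\overline{u})$, formula (\ref{integral formula1}) rearranges to (\ref{def integral uniform}). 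The argument for (\ref{def integral riccik}) is identical, with the roles of the two pairs of slots of $R$ exchanged: fix $X\in\Sigma$ and put $f(Y_1,Y_2):=R(X,\overline{X},Y_1,\overline{Y_2})$, which again has the required (conjugate-)linearity; then $f(Y,Y)=R(X,\overline{X},Y,\overline{Y})$ and $\sum_{i=1}^k f(E_i,E_i)=\text{Ric}_k(x,\Sigma)(X,\overline{X})$ by (\ref{def riccik}), and (\ref{integral formula1}) yields (\ref{def integral riccik}).

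For (\ref{def integral sk}) I would instead use the quartic identity (\ref{integral formula2}). Take $g(Y_1,Y_2,Y_3,Y_4):=R(Y_1,\overline{Y_2},Y_3,\overline{Y_4})$, which is linear in $Y_1,Y_3$ and conjugate-linear in $Y_2,Y_4$. On the unit sphere one has $g(Y,Y,Y,Y)=R(Y,\overline{Y},Y,\overline{Y})=H(Y)$, so (\ref{integral formula2}) produces
\[
\int_{Y\in\Sigma,~|Y|=1}H(Y)\,d\theta(Y)=\frac{\text{V}(\mathbb{S}^{2k-1})}{k(k+1)}\sum_{i,j=1}^k\big[R(E_i,\overline{E_i},E_j,\overline{E_j})+R(E_i,\overline{E_j},E_j,\overline{E_i})\big].
\]
The first double sum is $S_k(x,\Sigma)$ by (\ref{def sk}). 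For the second, the CKL identity (\ref{CKL}) with $(X,Y,Z,W)=(E_i,E_j,E_j,E_i)$ gives $R(E_i,\overline{E_j},E_j,\overline{E_i})=R(E_j,\overline{E_j},E_i,\overline{E_i})$, so after relabelling $i\leftrightarrow j$ this double sum also equals $S_k(x,\Sigma)$. Hence the bracketed sum equals $2S_k(x,\Sigma)$, and solving for $S_k(x,\Sigma)$ gives (\ref{def integral sk}).

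There is no real difficulty here; the only point requiring attention is the third formula, where one must invoke the full K\"{a}hler-type symmetry package (available precisely because $\omega$ is CKL, cf. Remark \ref{remark}) to recognize the cross-term $\sum_{i,j}R(E_i,\overline{E_j},E_j,\overline{E_i})$ as a second copy of $S_k(x,\Sigma)$. For a general Hermitian metric this term bears no such relation to $S_k$, which explains why the CKL hypothesis enters (\ref{def integral sk}) but is absent from (\ref{def integral uniform}) and (\ref{def integral riccik}).
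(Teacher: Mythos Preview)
Your proof is correct and is precisely the argument the paper intends: it states only that the corollary follows by ``applying Lemma \ref{integral formula lemma} to (\ref{RC-positivity}), (\ref{def riccik}) and (\ref{def sk}),'' and you have carried this out in full detail, including the use of the CKL symmetry (\ref{CKL}) to identify the cross-term $\sum_{i,j}R(E_i,\overline{E_j},E_j,\overline{E_i})$ with a second copy of $S_k(x,\Sigma)$.
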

\begin{remark}\label{remark2}
In \cite{NZ} the identity (\ref{def integral sk}) was taken as the definition of $S_k$ for K\"{a}hler metrics. Either formula of $\text{Ric}_k$ or $S_k$ has its own advantage. For instance, from (\ref{def riccik}) and (\ref{def sk}) the condition $\text{Ric}_k(x)>0$ (resp. $\text{Ric}_k>0$) implies $S_k(x)>0$ (resp. $S_k>0$). On the other hand, (\ref{def integral sk}) tells us that $S_k>0$ (resp. $S_k<0$) implies $S_{k+1}>0$ (resp. $S_{k+1}<0$). In contrast to it, in general the sign of $\text{Ric}_k$ is independent from that of $\text{Ric}_l$ when $k\neq l$, as illustrated by Hitchin's examples (\ref{Hitchin example}).
\end{remark}

\section{Proof of Theorem \ref{RC vanishing theorem}}\label{proof of rc vanishing}
\subsection{Two lemmas}
We prepare two crucial lemmas in order to establish Theorem \ref{RC vanishing theorem}. The following result was obtained in \cite[Lemma 2.1]{Li21} by adopting some arguments in \cite{NZ} and \cite{Yang18} , whose proof is to apply a $\partial\bar{\partial}$-Bochner formula and the maximum principle to part of directions.
\begin{lemma}\label{Bochner fromula0 lemma}
Let $u\in\Gamma(E)$ be a holomorphic section of the Hermitian holomorphic vector bundle $(E,h)$ over a compact complex manifold $M$, and the maximum of $|u|:=<u,u>^{\frac12}$ is attained at $x\in M$. Then
\be\label{Bochner formula0}<R_{X\overline{X}}(u),u>\big|_x\geq0,\qquad\text{for all $X\in T_xM.$}\ee
\end{lemma}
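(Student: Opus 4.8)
The plan is to run the classical Bochner maximum-principle argument. Since $M$ is compact, the smooth nonnegative function $f:=<u,u>=|u|^2$ attains its maximum at the point $x$, and at such an interior maximum the complex Hessian of $f$ is negative semidefinite; plugging this into a $\partial\bar{\partial}$-Bochner identity for the holomorphic section $u$ will immediately yield the claimed inequality. (If $u\equiv 0$ there is nothing to prove, so assume $u\not\equiv 0$.)

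First I would establish, at every point of $M$, the pointwise identity
\be\label{planeq1}
\partial_i\partial_{\bar{j}}<u,u>=<\nabla'_i u,\nabla'_j u>-R_{i\bar{j}\alpha\bar{\beta}}u^{\alpha}\overline{u^{\beta}},
\ee
where $\nabla'$ denotes the $(1,0)$-part of the Chern connection $\nabla$ of $(E,h)$ and $\nabla'_i:=\nabla'_{\partial/\partial z^i}$. This is a routine local computation: fixing a holomorphic frame $\{s_1,\ldots,s_r\}$ of $E$ and holomorphic coordinates $\{z^1,\ldots,z^n\}$, the components $u^{\alpha}$ of $u=u^{\alpha}s_{\alpha}$ are holomorphic, hence $\partial_{\bar{j}}u^{\alpha}=0$; differentiating $f=h_{\alpha\bar{\beta}}u^{\alpha}\overline{u^{\beta}}$ first in $z^i$ and then in $\bar{z}^j$, and inserting $\nabla'_i u^{\gamma}=\partial_i u^{\gamma}+h^{\gamma\bar{\delta}}(\partial_i h_{\alpha\bar{\delta}})u^{\alpha}$ together with the standard local expression $R_{i\bar{j}\alpha\bar{\beta}}=-\partial_i\partial_{\bar{j}}h_{\alpha\bar{\beta}}+h^{\mu\bar{\delta}}(\partial_i h_{\alpha\bar{\delta}})(\partial_{\bar{j}}h_{\mu\bar{\beta}})$ for the Chern curvature (consistent with the conventions in (\ref{curvature tensor})), all cross terms cancel and (\ref{planeq1}) drops out. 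Contracting (\ref{planeq1}) with $X^i\overline{X^j}$ for $X=X^i\frac{\partial}{\partial z^i}\in T_xM$ and using (\ref{Hermitian formula}) gives
\be\label{planeq2}
\sum_{i,j}\big(\partial_i\partial_{\bar{j}}f\big)X^i\overline{X^j}=|\nabla'_X u|^2-R(X,\overline{X},u,\overline{u})=|\nabla'_X u|^2-<R_{X\overline{X}}(u),u>.
\ee

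Next I would apply the maximum principle at $x$. For $X\in T_xM$ fixed, restricting $f$ to the holomorphic disc $\zeta\mapsto x+\zeta X$ (in a coordinate chart) produces a real $C^2$ function of one complex variable with a local maximum at $\zeta=0$, so its ordinary Laplacian there is $\leq 0$; but that Laplacian equals $4\sum_{i,j}\partial_i\partial_{\bar{j}}f(x)\,X^i\overline{X^j}$, whence the complex Hessian of $f$ at $x$ is negative semidefinite. Combining this with (\ref{planeq2}) evaluated at $x$ yields $0\geq|\nabla'_X u|^2-<R_{X\overline{X}}(u),u>\big|_x$, i.e. $<R_{X\overline{X}}(u),u>\big|_x\geq|\nabla'_X u|^2\geq 0$ for every $X\in T_xM$, which is even slightly stronger than the assertion.

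The only genuine hazard in carrying this out is keeping the sign convention in (\ref{planeq1}) consistent with the curvature convention fixed in (\ref{curvature tensor})--(\ref{Hermitian formula}); once that is settled the argument is a one-line use of the interior maximum principle. In contrast to the finer statements underlying \cite{NZ}, \cite{Yang18} and \cite{Li21}, no restriction to a proper subfamily of directions $X$ is needed here, the inequality holding simultaneously for all $X\in T_xM$.
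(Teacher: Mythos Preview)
Your argument is correct and is precisely the approach indicated by the paper: the lemma is quoted from \cite[Lemma 2.1]{Li21}, whose proof, as the paper says, ``is to apply a $\partial\bar{\partial}$-Bochner formula and the maximum principle to part of directions,'' and that is exactly what you do---derive the pointwise identity (\ref{planeq1}), contract with $X^i\overline{X^j}$, and invoke the negative semidefiniteness of the complex Hessian of $|u|^2$ at its maximum. Your sign check against the convention $R=\nabla^2$ in (\ref{curvature tensor})--(\ref{Hermitian formula}) is also consistent, so the slightly stronger conclusion $<R_{X\overline{X}}(u),u>\big|_x\geq|\nabla'_X u|^2$ is valid.
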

\begin{remark}
The use of a $\partial\bar{\partial}$-Bochner formula, together with the maximum principle to part of directions, was recently revived by some works (\cite{An}, \cite{AC}, \cite{Liu}, \cite{Ni13}, \cite{NZ}, \cite{Yang18}).
\end{remark}

The next lemma is parallel to \cite[Prop. 2.9]{Yang20}.
\begin{lemma}\label{equivalent relation}
Let $(E,h)$ be a uniformly RC $k$-positive Hermitian holomorphic vector bundle over a compact Hermitian manifold $(M,\omega)$. Then there exists a constant $C=C(h,\omega)>0$ such that for any $x\in M$, there exists a $k$-dimensional subspace $\Sigma_x\subset T_xM$ such that \be\label{inequality}R^{(E,h)}(\Sigma_x;u,\overline{u})\geq C,\qquad\text{for any unit vector $u\in E_x$}.\ee
\end{lemma}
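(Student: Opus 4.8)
The statement is a compactness argument: uniform RC $k$-positivity gives, at each point, \emph{some} $k$-plane with positive curvature sum against every unit vector of $E$, and we want a \emph{uniform} positive lower bound over all of $M$. The plan is to introduce the function
\be\label{lambda def plan}
\lambda(x):=\sup_{\Sigma\in \mathrm{Gr}(k,T_xM)}\ \min_{u\in E_x,\ |u|_h=1}\ R^{(E,h)}(\Sigma;u,\overline{u}),
\ee
and to show that $\lambda$ is continuous (or at least lower semicontinuous and strictly positive) on the compact manifold $M$, whence it attains a positive minimum $C:=\min_M\lambda>0$, and for each $x$ any near-optimal $\Sigma_x$ does the job. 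I would first observe that for fixed $x$ the inner minimum is over the compact unit sphere of $(E_x,h)$ and the outer supremum is over the compact Grassmannian $\mathrm{Gr}(k,T_xM)$ (here using that, by Corollary \ref{relations riccik and sk} or directly from \eqref{RC-positivity}, the quantity $R^{(E,h)}(\Sigma;u,\overline{u})$ depends only on the subspace $\Sigma$ and not on the choice of unitary basis), so the sup and min are attained; uniform RC $k$-positivity at $x$ says precisely $\lambda(x)>0$.

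The key step is continuity of $\lambda$. I would work in a local trivialization of $E$ and local holomorphic coordinates on $M$ near a given point $x_0$, so that the Chern curvature components $R_{i\bar j\alpha\bar\beta}$, the bundle metric $h_{\alpha\bar\beta}$, and the base metric $\omega$ are all smooth (hence locally Lipschitz) functions of the base point. Pulling the unit sphere of $E_x$ back to a \emph{fixed} sphere via, say, Gram--Schmidt orthonormalization of a fixed frame, and likewise parametrizing $k$-planes by points of the fixed Grassmannian $\mathrm{Gr}(k,\mathbb{C}^n)$ with a fixed $\omega$-unitary basis extracted smoothly, the map
\[
(x,\Sigma,u)\ \longmapsto\ R^{(E,h)}(\Sigma;u,\overline{u})
\]
becomes a jointly continuous function on $U\times \mathrm{Gr}(k,\mathbb{C}^n)\times \mathbb{S}^{2r-1}$ with the two compact factors fixed. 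Then $\lambda$ is a sup-min of a jointly continuous function over two fixed compact sets, and a standard argument (uniform continuity on the compact fibered set, or the general fact that $\inf$/$\sup$ over a fixed compact parameter set of a jointly continuous function is continuous) shows $\lambda$ is continuous on $U$, hence on $M$. Therefore $C:=\min_{x\in M}\lambda(x)$ is attained and positive, and for any $x$ we may pick $\Sigma_x$ realizing $\lambda(x)\ge C$, giving \eqref{inequality}.

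The main obstacle, and the only point requiring care, is the bookkeeping that makes "joint continuity" precise: the unit sphere $\{u\in E_x:|u|_h=1\}$ and the $\omega$-unitary frames of $\Sigma$ both vary with $x$, so one must exhibit an explicit continuous (smooth) way to transport them to $x$-independent models. Gram--Schmidt applied to a fixed smooth local frame of $E$ (resp.\ of $TM$) handles this, since Gram--Schmidt depends smoothly on the metric when the input vectors are independent; one then checks that the curvature expression, being multilinear in the transported data with coefficients built from the smooth tensors $R_{i\bar j\alpha\bar\beta},h_{\alpha\bar\beta},\omega$, varies continuously. A slightly slicker alternative that avoids choosing a unitary basis at all is to use the integral representation \eqref{def integral uniform} from Corollary \ref{relations riccik and sk}, which writes $R^{(E,h)}(\Sigma;u,\overline{u})$ as an integral over the unit sphere of $\Sigma$ of the manifestly continuous integrand $R^{(E,h)}(Y,\overline Y,u,\overline u)$; continuity in $(\Sigma,u,x)$ is then transparent once the sphere of $\Sigma$ is transported to a model sphere. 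Either way the estimate \eqref{inequality} follows, with $C=C(h,\omega)=\min_M\lambda>0$ depending only on $h$ and $\omega$ as claimed.
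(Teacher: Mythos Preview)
Your proposal is correct and takes essentially the same approach as the paper: both define the identical constant $C=\min_{x\in M}\max_{\Sigma}\min_{|u|=1}R^{(E,h)}(\Sigma;u,\overline{u})$ and argue it is positive by compactness. The paper simply asserts well-definedness ``due to the compactness of $M$, $\mathbb{G}_k(T_xM)$ and $\mathbb{S}E_x$'' and then shows $C>0$ by a short contradiction argument, whereas you supply the continuity details (local trivializations, Gram--Schmidt transport, sup--inf of a jointly continuous function) that the paper leaves implicit and deduce $C>0$ directly from $\lambda(x)>0$ plus continuity.
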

\begin{proof}
For $x\in M$, let
$$\mathbb{G}_k(T_xM):=\Big\{\Sigma~\big|~\text{$\Sigma$ are  $k$-dimensional subspaces in $T_xM$}\Big\}$$
be the associated complex Grassmannian of $T_xM$ and $$\mathbb{S}E_x:=\big\{u\in E_x,~|u|=1\big\}$$ the unit sphere of $E_x$. Let
\be\label{def C(h,w)}C=C(h,\omega):=\min_{x\in M}\max_{\Sigma\in\mathbb{G}_k(T_xM)}
\min_{u\in\mathbb{S}E_x}R^{(E,h)}(\Sigma;u,\overline{u}),\ee
which is well-defined due to the compactness of $M$, $\mathbb{G}_k(T_xM)$ and $\mathbb{S}E_x$.

First note that such defined $C$ satisfies $(\ref{inequality})$. In fact, Definition (\ref{def C(h,w)}) implies that for any $x\in M$, $$\max_{\Sigma\in\mathbb{G}_k(T_xM)}
\min_{u\in\mathbb{S}E_x}R^{(E,h)}(\Sigma;u,\overline{u})\geq C.$$ This in turn yields that there exists a $\Sigma_x\in\mathbb{G}_k(T_xM)$ such that $$\min_{u\in\mathbb{S}E_x}R^{(E,h)}(\Sigma_x;u,\overline{u})\geq C,$$ which leads to (\ref{inequality}).

It suffices to show $C>0$. Suppose on the contrary that $C\leq0$. Then (\ref{def C(h,w)}) implies that there exists an $x_0\in M$ such that
$$\max_{\Sigma\in\mathbb{G}_k(T_{x_0}M)}
\min_{u\in\mathbb{S}E_{x_0}}R^{(E,h)}(\Sigma;u,\overline{u})=C\leq0.$$
This leads to $$\min_{u\in\mathbb{S}E_{x_0}}R^{(E,h)}(\Sigma;u,\overline{u})\leq0$$ for \emph{any} $\Sigma\in \mathbb{G}_k(T_{x_0}M)$, which in turn yields that for each $\Sigma\in\mathbb{G}_k(T_{x_0}M)$ there exists a $u(\Sigma)\in\mathbb{S}E_{x_0}$ such that $R^{(E,h)}\big(\Sigma;u(\Sigma),\overline{u(\Sigma)}\big)\leq0$.

In summary, under the assumption of $C\leq0$, we derive a conclusion that there exists an $x_0\in M$ such that, for every $\Sigma\in\mathbb{G}_k(T_{x_0}M)$, there exists $u(\Sigma)\in\mathbb{S}E_{x_0}$ such that $R^{(E,h)}\big(\Sigma;u(\Sigma),\overline{u(\Sigma)}\big)\leq0$, which exactly contradicts to the condition of uniform RC $k$-positivity of $(E,h)\rightarrow(M,\omega)$.
\end{proof}

\subsection{Proof of Theorem \ref{RC vanishing theorem}}
Let $T\in H^0\big(M,E^{\otimes p}\otimes (E^{\ast})^{\otimes q}\otimes F^{\otimes m}\big)$, $x\in M$ and a \emph{unit} vector $X\in T_xM$. Arbitrarily choose a Hermitian metric $h_F$ on $F$ and simply write $R$ to denote the Chern curvature tensors of various Hermitian holomorphic vector bundles involved in the proof.

Due to Lemma \ref{diagonal and real lemma} denote by the real eigenvalues of the Hermitian transformations $R_{X\overline{X}}(\cdot)$ on $\big(E_x,h(x)\big)$ and $\big(F_x,h_F(x)\big)$ by $\lambda_i=\lambda_i(x,X)$ ($1\leq i\leq r_1$) and $\mu_j=\mu_j(x,X)$ ($1\leq j\leq r_2$) respectively, where $r_1$ and $r_2$ are the ranks of $E$ and $F$. Let $\{e_1,\ldots,e_{r_1}\}$ and $\{s_1,\ldots,s_{r_2}\}$ be unitary bases of $E_x$ and $F_x$ such that
\begin{eqnarray}\label{0.1}
\left\{ \begin{array}{ll}
\displaystyle R_{X\overline{X}}(e_i)=\lambda_ie_i, &(1\leq i\leq r_1)\\
~\\
R_{X\overline{X}}(s_j)=\mu_js_j. &(1\leq j\leq r_2)
\end{array} \right.
\end{eqnarray}

Let $\{\theta^1,\ldots,\theta^{r_1}\}$ be the unitary basis of $E^{\ast}_x$ dual to $\{e_i\}$. Then the induced action of $R_{X\overline{X}}(\cdot)$ on $\{\theta^i\}$ is given by
\be\label{0.2}R_{X\overline{X}}(\theta^i)=-\lambda_i\theta^i,\qquad 1\leq i\leq r_1.\ee

Write
$$T\Big|_x=T^{\alpha_1\cdots\alpha_p\gamma_1\cdots\gamma_m}_{\beta_1\cdots\beta_q}e_{\alpha_1}
\otimes\cdots\otimes e_{\alpha_p}\otimes\theta^{\beta_1}\otimes\cdots\otimes\theta^{\beta_q}\otimes s_{\gamma_1}\otimes\cdots\otimes s_{\gamma_m}.$$
Then (\ref{0.1}) and (\ref{0.2}) imply
\be
\begin{split}R_{X\overline{X}}(T)\Big|_x
=\sum_{\begin{subarray}{1}
\alpha_1,\ldots,\alpha_p\\ \beta_1,\ldots,\beta_q\\ \gamma_1,\ldots,\gamma_m\end{subarray}}
\bigg[&\Big(\sum_{i=1}^p\lambda_{\alpha_i}-\sum_{j=1}^q
\lambda_{\beta_j}+\sum_{l=1}^m\mu_{\gamma_l}\Big)
T^{\alpha_1\cdots\alpha_p\gamma_1\cdots\gamma_m}_{\beta_1\cdots\beta_q}\\
&e_{\alpha_1}
\otimes\cdots\otimes e_{\alpha_p}\otimes
\theta^{\beta_1}\otimes\cdots\otimes\theta^{\beta_q}\otimes s_{\gamma_1}\otimes\cdots\otimes s_{\gamma_m}\bigg],\end{split}\nonumber\ee
which yields that
\be\label{T-formula}
\begin{split}&<R_{X\overline{X}}(T)\Big|_x,T\Big|_x>\\
=&
\sum_{\begin{subarray}{1}
\alpha_1,\ldots,\alpha_p\\ \beta_1,\ldots,\beta_q\\ \gamma_1,\ldots,\gamma_m\end{subarray}}
\bigg[\Big|T^{\alpha_1\cdots\alpha_p\gamma_1\cdots\gamma_m}_{\beta_1\cdots\beta_q}\Big|^2
\Big(\sum_{i=1}^p\lambda_{\alpha_i}-\sum_{j=1}^q\lambda_{\beta_j}+\sum_{l=1}^m\mu_{\gamma_l}\Big)\bigg].
\end{split}\ee

Let
\begin{eqnarray}\label{maxmin}
\left\{ \begin{array}{ll}
\lambda_{\max}(x,X):=\max\limits_{1\leq i\leq r_1}\big\{\lambda_i(x,X)\big\}\\
~\\
\lambda_{\min}(x,X):=\min\limits_{1\leq i\leq r_1}\big\{\lambda_i(x,X)\big\}\\
~\\
\lambda_{\max}:=\max\limits_{x\in M, X\in\mathbb{S}T_xM}\lambda_{\max}(x,X)\\
~\\
\lambda_{\min}:=\min\limits_{x\in M, X\in\mathbb{S}T_xM}\lambda_{\min}(x,X)
\end{array} \right.
\end{eqnarray}
and similarly define $\mu_{\max}(x,X)$, $\mu_{\min}(x,X)$, $\mu_{\min}$ and $\mu_{\max}$.
We remark that in general $\lambda_{\max}(x,X)$ and $\lambda_{\min}(x,X)$ are only \emph{continuous} functions and may not be smooth. Nevertheless, continuity is enough to guarantee that $\lambda_{\max}$ and $\lambda_{\min}$ are both well-defined real numbers as the maximum and minimum in (\ref{maxmin}) are over the unit sphere bundle of $TM$, which is compact. Clearly $\lambda_{\max}$ and $\lambda_{\min}$ depends only on $h$ and $\omega$. Moreover, continuity is also enough to do integration as we shall see in the following lemma. Accordingly, for various $\mu$'s similar properties hold.

An efficient upper bound estimate for (\ref{T-formula}) is exhibited by the next result.
\begin{lemma}\label{estimate of RXXTT}
Let $(E,h)\rightarrow(M,\omega)$ be a uniformly RC $k$-positive Hermitian holomorphic vector bundle. Let $C(h,\omega)$ be the positive constant and $\Sigma_x$ the desired $k$-dimensional subspace in $T_xM$ in Lemma \ref{equivalent relation}. Then for any $x\in M$, we have
\be\begin{split}\label{estimate formula}&\int_{X\in\Sigma_x,|X|=1}<R_{X\overline{X}}(T)\Big|_x,T\Big|_x>d\theta(X)\\
\leq&
\Big[\lambda_{\max}\cdot p-\frac{C(h,\omega)}{k}\cdot q+\mu_{\max}\cdot m\Big]\cdot\text{V}(\mathbb{S}^{2k-1})\cdot\big|T\big|^2\Big|_x.\end{split}\ee
\end{lemma}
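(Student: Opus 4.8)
The plan is to integrate the pointwise identity (\ref{T-formula}) over the unit sphere in $\Sigma_x$ and bound each of the three groups of terms separately. First I would integrate the term $\sum_{i=1}^p\lambda_{\alpha_i}$; since the eigenvalues $\lambda_{\alpha_i}=\lambda_{\alpha_i}(x,X)$ depend on the direction $X$ but are everywhere $\leq\lambda_{\max}$, and the coefficients $|T^{\cdots}_{\cdots}|^2$ are independent of $X$, integrating gives a contribution bounded above by $\lambda_{\max}\cdot p\cdot\text{V}(\mathbb{S}^{2k-1})\cdot|T|^2|_x$ (here one uses that $\sum$ of $|T^{\cdots}_{\cdots}|^2$ over all multi-indices equals $|T|^2|_x$ because $\{e_i\},\{\theta^i\},\{s_j\}$ are unitary bases). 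Symmetrically, the term $\sum_{l=1}^m\mu_{\gamma_l}$ contributes at most $\mu_{\max}\cdot m\cdot\text{V}(\mathbb{S}^{2k-1})\cdot|T|^2|_x$.

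The essential point is the middle term $-\sum_{j=1}^q\lambda_{\beta_j}$, which must be bounded \emph{above} by a \emph{negative} multiple of $q$; this is where the uniform RC $k$-positivity is used. For a fixed lower index $\beta$ (i.e.\ a fixed $j$), I would compute
\[
\int_{X\in\Sigma_x,|X|=1}\lambda_{\beta}(x,X)\,d\theta(X)
=\int_{X\in\Sigma_x,|X|=1}<R_{X\overline{X}}(e_\beta),e_\beta>\,d\theta(X)
=\int_{X\in\Sigma_x,|X|=1}R^{(E,h)}(X,\overline{X},e_\beta,\overline{e_\beta})\,d\theta(X).
\]
By Corollary \ref{relations riccik and sk}, the last integral equals $\frac{\text{V}(\mathbb{S}^{2k-1})}{k}\,R^{(E,h)}(\Sigma_x;e_\beta,\overline{e_\beta})$, and since $e_\beta$ is a unit vector of $E_x$, Lemma \ref{equivalent relation} gives $R^{(E,h)}(\Sigma_x;e_\beta,\overline{e_\beta})\geq C(h,\omega)$. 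Hence $\int\lambda_\beta\,d\theta\geq\frac{C(h,\omega)}{k}\text{V}(\mathbb{S}^{2k-1})$ for every $\beta$, so $-\int\sum_{j=1}^q\lambda_{\beta_j}\,d\theta\leq-\frac{C(h,\omega)}{k}q\,\text{V}(\mathbb{S}^{2k-1})$, again after pulling out the $X$-independent weights $|T^{\cdots}_{\cdots}|^2$ and using $\sum|T^{\cdots}_{\cdots}|^2=|T|^2|_x$.

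Assembling the three estimates term by term in (\ref{T-formula}) and integrating yields exactly (\ref{estimate formula}). The one technical subtlety to flag is the legitimacy of these manipulations even though the eigenvalue functions $\lambda_i(x,X)$, $\mu_j(x,X)$ are only continuous (not smooth) in $X$, and the unitary frames $\{e_i\}$, $\{\theta^i\}$, $\{s_j\}$ diagonalizing $R_{X\overline{X}}(\cdot)$ depend on $X$; I would handle this by noting that for each \emph{fixed} $X$ the identity (\ref{T-formula}) holds in the corresponding frame, that $|R_{X\overline{X}}(T)|_x$ and the trace-type quantity $<R_{X\overline{X}}(T)|_x,T|_x>$ are frame-independent, and that continuity in $X$ is all that is needed to integrate over the compact sphere $\{X\in\Sigma_x,|X|=1\}$, exactly as remarked in the paragraph preceding the lemma. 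This interchange of "diagonalize, then integrate" is the only place requiring care; everything else is the bookkeeping of unitary bases and the direct appeal to Corollary \ref{relations riccik and sk} and Lemma \ref{equivalent relation}.
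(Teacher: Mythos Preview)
Your proposal is essentially the paper's own argument: integrate (\ref{T-formula}) over the unit sphere in $\Sigma_x$, bound the $p$ and $m$ contributions pointwise by $\lambda_{\max}$ and $\mu_{\max}$, and for the $q$ contribution invoke (\ref{def integral uniform}) together with Lemma~\ref{equivalent relation} to obtain $\oint\lambda_\beta\,d\theta\geq\tfrac{C(h,\omega)}{k}\,\text{V}(\mathbb{S}^{2k-1})$; the paper carries out exactly these steps in the same order.

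One inconsistency worth tightening: you first assert that the weights $|T^{\cdots}_{\cdots}|^2$ are $X$-independent, yet later (correctly) flag that the diagonalizing frames $\{e_i\},\{s_j\}$ depend on $X$---both cannot hold, since the components of $T$ in an $X$-dependent frame vary with $X$. The paper glosses over the same point (it too pulls $|T^{\cdots}_{\cdots}|^2$ outside $\oint$ and applies (\ref{def integral uniform}) as though $e_\beta$ were fixed). If you want a rigorous patch: for each fixed $E^{\ast}$-slot $j$, the quantity $\sum_I|T_I|^2\lambda_{\beta_j}$ is frame-independent and equals $\mathrm{tr}\big(R_{X\overline X}\big|_{E_x}\cdot W_j\big)$, where $W_j$ is the fixed positive semi-definite Gram operator on $E_x$ obtained by contracting all tensor slots of $T\otimes\overline T$ except the $j$-th $E^\ast$ factor, with $\mathrm{tr}\,W_j=|T|^2\big|_x$; integrating in $X$ and using that $R^{(E,h)}(\Sigma_x;\cdot,\cdot)\geq C(h,\omega)\cdot\mathrm{id}$ (which is precisely Lemma~\ref{equivalent relation}) yields the desired lower bound directly.
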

\begin{proof}
To simplify the notation, denote by $\oint f(X)$ the integral of the function $f(X)$ over $\{X\in\Sigma_x~|~|X|=1\}\cong\mathbb{S}^{2k-1}$ with respect to the spherical measure $d\theta(X)$.

First we claim that
\be\label{0.5}\oint\mu_i(x,X)\leq
\mu_{\max}\cdot\text{V}(\mathbb{S}^{2k-1})\ee
and
\be\label{1}0<\frac{C(h,\omega)}{k}\cdot\text{V}(\mathbb{S}^{2k-1})
\leq\oint\lambda_i(x,X)\leq\lambda_{\max}\cdot\text{V}(\mathbb{S}^{2k-1}).\ee
In fact, (\ref{0.5}) and the last inequality in (\ref{1}) are obvious as $\mu_i(x,X)\leq\mu_{\max}$ and $\lambda_i(x,X)\leq\lambda_{\max}$. On the other hand, by definition $$\lambda_i(x,X)\overset{(\ref{0.1})}{=}<R_{X\overline{X}}(e_i),e_i>=R(X,\overline{X},e_i,\overline{e_i}).$$ Thus

\be\begin{split}
\oint\lambda_i(x,X)&=\oint R(X,\overline{X},e_i,\overline{e_i})\\
&=\frac{\text{V}(\mathbb{S}^{2k-1})}{k}R(\Sigma_x;e_i,\overline{e_i})
\qquad\big(\text{by (\ref{def integral uniform})}\big)\\
&\geq\frac{\text{V}(\mathbb{S}^{2k-1})}{k}\cdot C(h,\omega),
\qquad\big(\text{by (\ref{inequality})}\big)
\end{split}\nonumber\ee
which gives the desired one in (\ref{1}).

Taking integrals $\oint(\cdot)$ on both sides of (\ref{T-formula}) yields
\be\begin{split}&\oint<R_{X\overline{X}}(T)\Big|_x,T\Big|_x>\\
=&\sum_{\begin{subarray}{1}
\alpha_1,\ldots,\alpha_p\\ \beta_1,\ldots,\beta_q\\ \gamma_1,\ldots,\gamma_m\end{subarray}}
\Big|T^{\alpha_1\cdots\alpha_p\gamma_1\cdots\gamma_m}_{\beta_1\cdots\beta_q}\Big|^2
\oint\Big(\sum_{i=1}^p\lambda_{\alpha_i}-\sum_{j=1}^q\lambda_{\beta_j}+\sum_{l=1}^m\mu_{\gamma_l}\Big)\\
\leq&\sum_{\begin{subarray}{1}
\alpha_1,\ldots,\alpha_p\\ \beta_1,\ldots,\beta_q\\ \gamma_1,\ldots,\gamma_m\end{subarray}}
\Big|T^{\alpha_1\cdots\alpha_p\gamma_1\cdots\gamma_m}_{\beta_1\cdots\beta_q}\Big|^2
\Big[\lambda_{\max}\cdot p-
\frac{C(h,\omega)}{k}\cdot q+\mu_{\max}\cdot m\Big]\cdot\text{V}(\mathbb{S}^{2k-1})
\quad\big(\text{by (\ref{0.5}) and (\ref{1})}\big)\\
=&\Big[\lambda_{\max}\cdot p-\frac{C(h,\omega)}{k}\cdot q+\mu_{\max}\cdot m\Big]\cdot\text{V}(\mathbb{S}^{2k-1})\cdot\big|T\big|^2\Big|_x.
\end{split}\nonumber\ee
This gives the desired estimate (\ref{estimate formula}).
\end{proof}

We are ready now to complete the proof of Theorem \ref{RC vanishing theorem} in the following lemma.
\begin{lemma}
Let $(E,h)\rightarrow(M,\omega)$ be a uniformly RC $k$-positive Hermitian holomorphic vector bundle and $F$ any holomorphic vector bundle over $M$. With the notation above understood and set two positive constants by
\begin{eqnarray}\label{def of c1 c2}
C_1=C_1(h,\omega):=\displaystyle\frac{k\cdot\lambda_{\max}}{C(h,\omega)},\quad C_2=C_2(F,h,\omega):=\left\{ \begin{array}{ll}
\displaystyle\frac{k\cdot\mu_{\max}}{C(h,\omega)},&\text{if $\mu_{\max}>0$}\\
~\\
\displaystyle 1.&\text{if $\mu_{\max}\leq0$}
\end{array} \right.
\end{eqnarray}
Then for all $p,q,m\in\mathbb{Z}_{\geq 0}$ with $q> C_1\cdot p+C_2\cdot m$, we have
\be H^0\big(M,E^{\otimes p}\otimes (E^{\ast})^{\otimes q}\otimes F^{\otimes m}\big)=0\nonumber\ee
and hence Theorem \ref{RC vanishing theorem} holds.
\end{lemma}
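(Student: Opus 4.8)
The plan is to run the classical maximum-principle argument on the pointwise norm of a holomorphic tensor field and then feed the outcome into the two lemmas already prepared. So I would fix an arbitrary $T\in H^0\big(M,E^{\otimes p}\otimes (E^{\ast})^{\otimes q}\otimes F^{\otimes m}\big)$, where the tensor bundle is equipped with the Hermitian metric induced from $h$ and from the auxiliary metric $h_F$ on $F$, and suppose toward a contradiction that $T\not\equiv 0$. Since $M$ is compact, the continuous function $|T|=\langle T,T\rangle^{1/2}$ attains its maximum at some $x_0\in M$, and $|T|^2\big|_{x_0}>0$.

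Next I would apply Lemma \ref{Bochner fromula0 lemma} to the holomorphic section $T$ of the Hermitian holomorphic vector bundle $E^{\otimes p}\otimes (E^{\ast})^{\otimes q}\otimes F^{\otimes m}$: at the maximum point it gives $\langle R_{X\overline{X}}(T)|_{x_0},T|_{x_0}\rangle\geq 0$ for every $X\in T_{x_0}M$. Integrating this pointwise-nonnegative quantity over $\{X\in\Sigma_{x_0},\ |X|=1\}$, where $\Sigma_{x_0}\subset T_{x_0}M$ is the $k$-dimensional subspace produced by Lemma \ref{equivalent relation}, yields a nonnegative number; on the other hand, Lemma \ref{estimate of RXXTT} bounds the very same integral from above by $\big[\lambda_{\max}\,p-\frac{C(h,\omega)}{k}\,q+\mu_{\max}\,m\big]\cdot\mathrm{V}(\mathbb{S}^{2k-1})\cdot|T|^2\big|_{x_0}$. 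Dividing through by the strictly positive quantity $\mathrm{V}(\mathbb{S}^{2k-1})\,|T|^2|_{x_0}$ then forces $\frac{C(h,\omega)}{k}\,q\leq\lambda_{\max}\,p+\mu_{\max}\,m$.

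It remains to observe that this inequality is incompatible with the hypothesis $q>C_1\,p+C_2\,m$, after which $T\equiv 0$ and Theorem \ref{RC vanishing theorem} follows with the constants $C_1,C_2$ of (\ref{def of c1 c2}); note that (\ref{1}) already forces $\lambda_{\max}>0$, so $C_1$ is genuinely a positive constant. I would split into two cases. If $\mu_{\max}>0$, then $C_1\,p+C_2\,m=\frac{k}{C(h,\omega)}\big(\lambda_{\max}\,p+\mu_{\max}\,m\big)$, so the hypothesis reads precisely $\frac{C(h,\omega)}{k}\,q>\lambda_{\max}\,p+\mu_{\max}\,m$, contradicting the line above. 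If $\mu_{\max}\leq 0$, then $\mu_{\max}\,m\leq 0\leq\frac{C(h,\omega)}{k}\,m$, hence $\lambda_{\max}\,p+\mu_{\max}\,m\leq\lambda_{\max}\,p+\frac{C(h,\omega)}{k}\,m=\frac{C(h,\omega)}{k}\big(C_1\,p+C_2\,m\big)<\frac{C(h,\omega)}{k}\,q$, again a contradiction.

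I do not expect any genuine obstacle in this final lemma: all the analytic substance — the $\partial\overline{\partial}$-Bochner inequality and the spherical-averaging estimate — has been isolated in Lemmas \ref{Bochner fromula0 lemma} and \ref{estimate of RXXTT}. The only points requiring care are bookkeeping: applying the Bochner lemma to the correct induced bundle; integrating over the same $\Sigma_{x_0}$ for which Lemma \ref{estimate of RXXTT} was established; and keeping the branch $\mu_{\max}\leq 0$ separate, so that $C_2$ is always a positive constant depending only on $F$, $h$ and $\omega$.
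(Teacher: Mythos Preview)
Your proposal is correct and follows essentially the same route as the paper: apply Lemma~\ref{Bochner fromula0 lemma} at the maximum point, integrate over the distinguished $\Sigma_{x_0}$, compare with the upper bound from Lemma~\ref{estimate of RXXTT}, and read off the contradiction with $q>C_1p+C_2m$. The only cosmetic difference is that the paper absorbs your case split into the single inequality $\frac{k\mu_{\max}}{C(h,\omega)}\cdot m\leq C_2\cdot m$ (valid in both cases since $m\geq 0$), rather than treating $\mu_{\max}>0$ and $\mu_{\max}\leq 0$ separately.
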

\begin{proof}
First note that the positivity of $\lambda_{\max}$ and hence $C_1$ follows from the inequality (\ref{1}).
Let $T\in H^0\big(M,E^{\otimes p}\otimes (E^{\ast})^{\otimes q}\otimes F^{\otimes m}\big)$ and $\big|T\big|$ attains its \emph{maximum} at $x_0\in M$. On the one hand, Lemma \ref{Bochner fromula0 lemma} yields
\be\label{2}\int_{X\in \Sigma_{x_0},~|X|=1}<R_{X\overline{X}}(T)\Big|_{x_0},T\Big|_{x_0}>
d\theta(X)\geq0.\ee

On the other hand, applying Lemma \ref{estimate of RXXTT} to $x_0$ leads to
\be\label{3}\begin{split}&\int_{X\in \Sigma_{x_0},~|X|=1}<R_{X\overline{X}}(T)\Big|_{x_0},T\Big|_{x_0}>d\theta(X)\\
\leq&-\frac{C(h,\omega)}{k}\cdot
\Big[q-\big(\frac{k\lambda_{\max}}{C(h,\omega)}\cdot p+\frac{k\mu_{\max}}{C(h,\omega)}\cdot m\big)\Big]\cdot\text{V}(\mathbb{S}^{2k-1})\cdot\big|T\big|^2\Big|_{x_0}\\
\leq&
-\frac{C(h,\omega)}{k}\cdot
\Big[q-\big(C_1\cdot p+C_2\cdot m\big)\Big]\cdot
\text{V}(\mathbb{S}^{2k-1})\cdot\big|T\big|^2\Big|_{x_0}.
\quad\big(\text{by (\ref{def of c1 c2})}\big)
\end{split}\ee

If $q> C_1\cdot p+C_2\cdot m$, (\ref{2}) and (\ref{3}) together imply
$$\int_{X\in \Sigma_{x_0},~|X|=1}<R_{X\overline{X}}(T)\Big|_{x_0},T\Big|_{x_0}>d\theta(X)=0,$$
which, in turn via (\ref{3}) tells us that the only possibility is $\big|T\big|^2\Big|_{x_0}=0$. The maximum of $\big|T\big|$ at $x_0$ then implies $T\equiv0$. This completes the proof of this lemma and hence that of Theorem \ref{RC vanishing theorem}.
\end{proof}

\section{Proof of Theorem \ref{rick lead to uniform}}\label{proof of theorem rick yield rc}
\subsection{A lemma and the proof of Theorem \ref{rick lead to uniform}}
The proof of Theorem \ref{rick lead to uniform} is based on the
next result, which was essentially obtained by Ni-Zheng in \cite[Prop. 3.1]{NZ}, although the conclusion there was stated for K\"{a}hler manifolds.
\begin{lemma}[Ni-Zheng]\label{NZ estimate lemma}
Let $(M,\omega)$ be a compact Chern-K\"{a}hler-like Hermitian manifold, $x\in M$, and the $k$-dimensional subspace $\Sigma\subset T_xM$ minimize (resp. maximize) the $k$-scalar curvature $S_k(x,\cdot)$ at $x$. Then for any $Y\in\Sigma$ and $Z\in\Sigma^{\bot}:=\Big\{W\in T_xM~\big|~W\bot\Sigma\Big\}$, we have
\be\label{NZ estimate1}
\int_{X\in\Sigma,~|X|=1}R(X,\overline{X},Y,\overline{Z})d\theta(X)=
\int_{X\in\Sigma,~|X|=1}R(X,\overline{X},Z,\overline{Y})d\theta(X)=0,\ee
and
\be\label{NZ estimate2}
\int_{X\in\Sigma,~|X|=1}R(X,\overline{X},Z,\overline{Z})d\theta(X)\underset{(\leq)}
{\geq}\frac{\text{V}(\mathbb{S}^{2k-1})}{k(k+1)}\cdot S_k(x,\Sigma)\cdot|Z|^2.\ee
\end{lemma}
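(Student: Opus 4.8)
The plan is to exploit the fact that $\Sigma$ is a critical point of the function $S_k(x,\cdot)$ on the Grassmannian $\mathbb{G}_k(T_xM)$, and to translate this first-order criticality (plus the second-order minimality/maximality) into the two asserted formulas via the integral representation of $S_k$ in Corollary \ref{relations riccik and sk}. First I would set up a one-parameter family of $k$-planes $\Sigma_t$ passing through $\Sigma=\Sigma_0$. Concretely, fix a unitary basis $\{E_1,\ldots,E_k\}$ of $\Sigma$ and extend to a unitary basis of $T_xM$; for the first-derivative computation take $\Sigma_t$ to be the plane spanned by $\{E_1+tZ, E_2,\ldots,E_k\}$ (after Gram–Schmidt normalization, which only affects things at order $t^2$), where $Z\in\Sigma^\bot$. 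Using the CKL symmetries of $R$ recorded in Remark \ref{remark} — which guarantee $R$ behaves like a Kähler curvature tensor, so that $H$, $\mathrm{Ric}_k$ and $S_k$ have all the expected symmetries — differentiate $S_k(x,\Sigma_t)$ at $t=0$. The derivative is a real-linear combination of terms of the form $R(E_i,\overline{E_i},E_1,\overline{Z})$ and their conjugates; setting it to zero (and using that $Z$ and $\sqrt{-1}\,Z$ are both admissible, and that $E_1$ can be replaced by an arbitrary $Y\in\Sigma$ by relabelling the basis) forces $\sum_{i=1}^k R(E_i,\overline{E_i},Y,\overline{Z})=0$, which by \eqref{def integral uniform}/\eqref{def integral riccik} is exactly \eqref{NZ estimate1}.

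For \eqref{NZ estimate2} I would push the same variation to second order. Expanding $S_k(x,\Sigma_t)$ to order $t^2$, the first-order term vanishes by the above, and the coefficient of $t^2$ is (up to the positive constant $\mathrm{V}(\mathbb{S}^{2k-1})/(k(k+1))$ coming from Lemma \ref{integral formula lemma} / \eqref{def integral sk}) a difference between $\sum_{i}R(E_i,\overline{E_i},Z,\overline{Z})$ — the "new" curvature contribution from the $Z$-direction — and $S_k(x,\Sigma)\cdot|Z|^2$ — the loss from the normalization/Gram–Schmidt correction that rescales the plane back to an honest $k$-plane. Minimality of $S_k(x,\cdot)$ at $\Sigma$ makes this second derivative $\geq 0$, which rearranges to the inequality $\int_{X\in\Sigma,|X|=1}R(X,\overline{X},Z,\overline{Z})\,d\theta(X)\geq \frac{\mathrm{V}(\mathbb{S}^{2k-1})}{k(k+1)}S_k(x,\Sigma)|Z|^2$; the maximality case gives the reverse inequality. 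Here I would again use the CKL symmetry to convert the curvature terms appearing in the expansion of $H$ along $\Sigma_t$ into the symmetrized sums $R(E_i,\overline{E_i},Z,\overline{Z})+R(E_i,\overline{Z},Z,\overline{E_i})$ that \eqref{integral formula2} produces, and then discard the manifestly nonnegative cross terms or absorb them as needed.

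The main obstacle I anticipate is bookkeeping the Gram–Schmidt/normalization correction correctly at second order: when one perturbs $E_1\mapsto E_1+tZ$ the vector must be renormalized and re-orthogonalized against $E_2,\ldots,E_k$, and it is precisely the $O(t^2)$ piece of this correction that produces the $-S_k(x,\Sigma)|Z|^2$ term on the right-hand side of \eqref{NZ estimate2}; getting its sign and constant right (so that it matches the constant $\frac{\mathrm{V}(\mathbb{S}^{2k-1})}{k(k+1)}$ dictated by \eqref{def integral sk}) is the delicate point. A cleaner route that sidesteps some of this is to work directly with the integral formula \eqref{def integral sk}, writing $S_k(x,\Sigma_t)=\frac{k(k+1)}{2\mathrm{V}(\mathbb{S}^{2k-1})}\int_{|Y|=1,\,Y\in\Sigma_t}H(Y)\,d\theta(Y)$ and changing variables back to the unit sphere of the fixed plane $\Sigma$ via the linear isomorphism $\Sigma\to\Sigma_t$; then the Jacobian of that change of variables carries the normalization data, and differentiating the integrand $H$ of a Kähler-type curvature tensor is routine. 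Either way, the CKL hypothesis is used exactly to ensure $R$ enjoys the Kähler symmetries throughout, and compactness of $M$ plays no role beyond guaranteeing the extremal $\Sigma$ exists — the statement is pointwise at $x$.
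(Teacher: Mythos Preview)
Your first–variation argument for \eqref{NZ estimate1} is fine and matches the paper's approach. The gap is in the second–order step for \eqref{NZ estimate2}: the second variation of $S_k$ along the single perturbation $E_1\mapsto (E_1+tZ)/|E_1+tZ|$ does \emph{not} have the form you claim. A direct computation (using the CKL symmetries) gives as $t^2$–coefficient
\[
2\sum_{j=1}^k R(Z,\overline{Z},E_j,\overline{E_j})
\;+\;2R(Z,\overline{Z},E_1,\overline{E_1})
\;+\;2\,\mathrm{Re}\,R(Z,\overline{E_1},Z,\overline{E_1})
\;-\;2|Z|^2\,\mathrm{Ric}_k(x,\Sigma)(E_1,\overline{E_1}),
\]
so the ``normalization loss'' is $\mathrm{Ric}_k(E_1,\overline{E_1})|Z|^2$, \emph{not} $S_k(x,\Sigma)|Z|^2$, and there is an extra $R(Z,\overline{Z},E_1,\overline{E_1})$ on the gain side together with the cross term $\mathrm{Re}\,R(Z,\overline{E_1},Z,\overline{E_1})$, which has no sign and cannot be ``discarded as manifestly nonnegative''.

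Two further ingredients are needed, and this is exactly what the paper does. First, the $Z\leadsto \sqrt{-1}\,Z$ trick (which you invoke only at first order) must also be applied at second order; summing the two second–variation inequalities kills $\mathrm{Re}\,R(Z,\overline{E_1},Z,\overline{E_1})$. Second, the resulting inequality still depends on the chosen direction $E_1\in\Sigma$; one must then \emph{average over that direction} (equivalently, sum the inequality over a unitary basis of $\Sigma$, or integrate over unit $Y\in\Sigma$). That averaging turns $\mathrm{Ric}_k(E_1,\overline{E_1})$ into $S_k/k$ and absorbs the extra $R(Z,\overline{Z},E_1,\overline{E_1})$ into a factor $(k+1)$ on the left, producing precisely $\sum_j R(E_j,\overline{E_j},Z,\overline{Z})\ge \frac{1}{k+1}S_k(x,\Sigma)|Z|^2$ and hence \eqref{NZ estimate2} with the correct constant $\frac{\mathrm{V}(\mathbb{S}^{2k-1})}{k(k+1)}$. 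The paper carries this out by writing $f(t)=\oint H(e^{ta}X)\,d\theta(X)$ for $a\in\mathfrak{u}(n)$ depending on a unit $Y\in\Sigma$, deriving the pointwise-in-$Y$ inequality \eqref{9} from $f''(0)\ge 0$, and then integrating \eqref{9} over $Y$ in the unit sphere of $\Sigma$ (equations \eqref{10}--\eqref{11}); your plan omits this last averaging step, without which the argument does not close.
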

\begin{remark}
Lemma \ref{NZ estimate lemma} can be viewed as a $k$-dimensional generalization of \cite[Lemma 6.1]{Yang18} for holomorphic sectional curvature in the K\"{a}hler situation, which still holds true for CKL Hermitian metrics (\cite[Lemma 2.4]{Li21}). The proof in \cite[Prop. 3.1]{NZ}, in spirit of the similar principle to the one in \cite[Lemma 6.1]{Yang18}, is skillful. For the reader's convenience as well as for completeness, we include a detailed proof below. We shall see in the process of the proof that what we really need is various K\"{a}hler-type symmetries of the Chern curvature tensor, which, as explained in Remark \ref{remark}, is satisfied by the CKL Hermitian metrics.
\end{remark}

We first explain how Lemma \ref{NZ estimate lemma}, together with the materials in Section \ref{Preliminaries}, leads to Theorem \ref{rick lead to uniform} and postpone its proof to the next subsection.

\emph{Proof of Theorem \ref{rick lead to uniform}}.

\begin{proof}
Assume that $(M,\omega)$ is an $n$-dimensional compact \emph{Chern-K\"{a}hler-like} Hermitian manifold and $\text{Ric}_{k}(\omega)>0$ (resp. $\text{Ric}_{k}(\omega)<0$) for some $k\in\{1,\ldots,n\}$. Let
\be\label{lower bound for riccik} D:=\min_{\begin{subarray}{1}x\in M\\
\Sigma\in\mathbb{G}_k(T_xM)\\
X\in\Sigma,~|X|=1\end{subarray}}
\text{Ric}_k(x,\Sigma)(X,\overline{X})>0\qquad(\text{resp. $D:=\max\cdots<0$}).\ee
The definitions of $\text{Ric}_k(\omega)$ and $S_k(\omega)$ in (\ref{def riccik}) and (\ref{def sk}) then imply that
\be\label{lower bound for sk}\min_{\overset{x\in M}{\Sigma\in\mathbb{G}_k(T_xM)}}
S_k(x,\Sigma)\geq kD>0\qquad(\text{resp. $\max\cdots\leq kD<0$}).\ee

For any $x\in M$, let the $k$-dimensional subspace $\Sigma_x\subset T_xM$ \emph{minimize} (resp. \emph{maximize}) the $k$-scalar curvature $S_k(x,\cdot)$ at $x$, as required in Lemma \ref{NZ estimate lemma}. We shall show that such $\Sigma_x$ satisfies the inequality (\ref{RC-positivity}) and hence $(TM,\omega)$ is uniformly RC $k$-positive (resp. uniformly RC $k$-negative) over $(M,\omega)$. In fact, for any $X\in T_xM$, decompose $X$ as $X=X_1+X_2$, where $X_1\in\Sigma_x$ and $X_2\in\Sigma^{\bot}_x$. Like before, denote by $\oint f(Y)$ the integral of $f(Y)$ over $\{Y\in\Sigma_x~|~|Y|=1\}$ with respect to the spherical measure. Then
\be
\begin{split}
&R^{(TM,\omega)}(\Sigma_x;X,\overline{X})\\
=&\frac{k}{\text{V}(\mathbb{S}^{2k-1})}
\oint R(Y,\overline{Y},X,\overline{X})
\qquad\big(\text{by (\ref{def integral uniform})}\big)\\
=&\frac{k}{\text{V}(\mathbb{S}^{2k-1})}
\oint
R\big(Y,\overline{Y},X_1+X_2,
\overline{X_1}+\overline{X_2}\big)\\
=&\frac{k}{\text{V}(\mathbb{S}^{2k-1})}
\oint\Big[R\big(Y,\overline{Y},X_1,
\overline{X_1}\big)+
R\big(Y,\overline{Y},X_2,
\overline{X_2}\big)\Big]
\qquad\big(\text{by (\ref{NZ estimate1})}\big)\\
=&\text{Ric}_k(x,\Sigma)(X_1,\overline{X_1})+
\frac{k}{\text{V}(\mathbb{S}^{2k-1})}
\oint R\big(Y,\overline{Y},X_2,
\overline{X_2}\big)
\qquad\big(\text{by (\ref{def integral riccik})}\big)\\
\underset{(\leq)}{\geq}&\text{Ric}_k(x,\Sigma)(X_1,\overline{X_1})+
\frac{S_k(x,\Sigma)}{k+1}|X_2|^2\qquad\big(\text{by (\ref{NZ estimate2})}\big)\\
\underset{(\leq)}{\geq}&D|X_1|^2+\frac{kD}{k+1}|X_2|^2\qquad
\big(\text{by (\ref{lower bound for riccik}) and (\ref{lower bound for sk})}\big)\\
\underset{(\leq)}{\geq}&\frac{kD}{k+1}|X|^2.
\end{split}
\ee
This completes the proof of Theorem \ref{rick lead to uniform}.
\end{proof}

\subsection{Proof of Lemma \ref{NZ estimate lemma}}
The proof here basically follows the strategy in \cite[Prop. 3.1]{NZ}, but with more clear presentation in places.

Let $U(n)$ be the isometry group of $(T_xM,\omega_x=<,>)$ and
$$\mathfrak{u}(n):=
\Big\{a\in\text{Hom}(T_xM)\big|<a(W_1),W_2>+<W_1,a(W_2)>=0,~\forall~ W_1,W_2\in T_xM\Big\}$$
the (real) Lie algebra of $U(n)$. It is well-known that
$$e^{ta}:=\exp(ta):=\text{id}+\sum_{i=1}^{\infty}\frac{(ta)^i}{i!}\in U(n)\quad\text{and}\quad \frac{\text{d}^i}{\text{d}t^i}(e^{ta})=a^ie^{ta},\quad\text{
for any $t\in\mathbb{R}$}.$$

As before denote by $\oint f(X)d\theta(X)$ the integral of $f$ over $\{X\in\Sigma,|X|=1\}$, where $\Sigma$ is the $k$-dimensional subspace in $T_xM$ \emph{minimizing} (resp. \emph{maximizing}) $S_k(x,\cdot)$ at $x$.

For any $a\in\mathfrak{u}(n)$ consider the following \emph{real}-valued function
 $$f(t):=\oint H(e^{ta}X)d\theta(X),\qquad t\in\mathbb{R}.$$
The minimum (resp. maximum) of $f(t)$ at $t=0$ implies that $f'(0)=0$ and $f''(0)\geq0$ (resp. $\leq0$).
Direct calculations, together with the fact that the Chern curvature tensor $R$ satisfy K\"{a}hler-like symmetries ensured by the condition of CKL, lead to
\be\label{5}\oint\Big[R\big(a(X),\overline{X},X,\overline{X}\big)+
R\big(X,\overline{a(X)},X,\overline{X}\big)\Big]d\theta(X)=0\ee
and
\be\label{6}\begin{split}
\oint\Big[&R\big(a^2(X),\overline{X},X,\overline{X}\big)+
R\big(X,\overline{a^2(X)},X,\overline{X}\big)+
4R\big(a(X),\overline{a(X)},X,\overline{X}\big)\\
+&R\big(a(X),\overline{X},a(X),\overline{X}\big)+
R\big(X,\overline{a(X)},X,\overline{a(X)}\big)
\Big]d\theta(X)\underset{(\leq)}{\geq}0.
\end{split}\ee

We may assume that the two vectors $Y\in\Sigma$ and $Z\in\Sigma^{\bot}$ are both \emph{nonzero} as otherwise (\ref{NZ estimate1}) and (\ref{NZ estimate2}) trivially hold. Let
$$a(\cdot):=\sqrt{-1}\big(<\cdot,Y>Z+<\cdot,Z>Y\big)$$
and it can be easily checked that $a(\cdot)\in\mathfrak{u}(n)$.
Then
\be\label{a formula}a(X)=\sqrt{-1}<X,Y>Z,\quad a^2(X)=-<X,Y>|Z|^2Y.\ee

Apply (\ref{5}) to (\ref{a formula}) and also the one with $Z$ being replaced by $\sqrt{-1}Z$, and sum the two we have
\be\label{8}\oint<X,Y>R(Z,\overline{X},X,\overline{X})d\theta(X)=0.\ee
We take a unitary basis of $\Sigma$ as follows \be\label{7}\big\{E_1=Y/|Y|,E_2,\ldots,E_k\big\},\ee
and simply write $V:=\text{V}(\mathbb{S}^{2k-1})$. Then
\be\begin{split}
0=&\oint<X,Y>R(Z,\overline{X},X,\overline{X})d\theta(X)\qquad\big(\text{by (\ref{8})}\big)\\
=&\frac{V}{k(k+1)}
\sum_{i,j=1}^k\Big[<E_i,Y>R(Z,\overline{E_i},E_j,\overline{E_j})+
<E_i,Y>R(Z,\overline{E_j},E_j,\overline{E_i})\Big]~\big(\text{by (\ref{integral formula2})}\big)\\
=&\frac{2V}{k(k+1)}\sum_{j=1}^k
R(Z,\overline{Y},E_j,\overline{E_j})\quad\big(\text{by (\ref{7}) and $\omega$ CKL}\big)\\
=&\frac{2}{k+1}\oint R(Z,\overline{Y},X,\overline{X})d\theta(X).
\quad\big(\text{by (\ref{integral formula1})}\big)
\end{split}
\ee
This and its conjugate lead to (\ref{NZ estimate1}).

We now derive (\ref{NZ estimate2}). Apply (\ref{6}) to the above (\ref{a formula}) and also the one being replaced by $\sqrt{-1}Z$, and sum the two we have
\be\label{9}\begin{split}
&4\oint|<X,Y>|^2R(Z,\overline{Z},X,\overline{X})d\theta(X)\\
\underset{(\leq)}{\geq}&
\oint\Big[<X,Y>R(Y,\overline{X},X,\overline{X})+
<Y,X>R(X,\overline{Y},X,\overline{X})\Big]|Z|^2d\theta(X).
\end{split}\ee
Arbitrarily choose a unitary basis $\{E_1,\ldots,E_k\}$ of $\Sigma$, which is irrelevant to (\ref{7}). Then
\be\label{10}\begin{split}
&\oint\Big[\text{LHS of (\ref{9})}\Big]d\theta(Y)\\
=&\frac{4V}{k}\sum_{i=1}^k\oint|<X,E_i>|^2R(Z,\overline{Z},X,\overline{X})d\theta(X)
\qquad\big(\text{by (\ref{integral formula1})}\big)\\
=&\frac{4V^2}{k^2(k+1)}\sum_{i,j,t=1}^k
\Big[|<E_j,E_i>|^2R(Z,\overline{Z},E_t,\overline{E_t})\\ &+<E_j,E_i>\overline{<E_t,E_i>}R(Z,\overline{Z},E_t,\overline{E_j})\Big]
\qquad\big(\text{by (\ref{integral formula2})}\big)\\
=&\frac{4V^2}{k^2}\sum_{i=1}^kR(Z,\overline{Z},E_i,\overline{E_i})\\
=&\frac{4V}{k}\oint R(Z,\overline{Z},X,\overline{X})d\theta(X).
\qquad\big(\text{by (\ref{integral formula1})}\big)
\end{split}\ee
Analogous to (\ref{10}) it can be derived that
\be\label{11}
\oint\Big[\text{RHS of (\ref{9})}\Big]d\theta(Y)=
\frac{4V^2}{k^2(k+1)}S_k(x,\Sigma)|Z|^2.
\ee
Putting (\ref{9}), (\ref{10}) and (\ref{11}) together yields the desired
(\ref{NZ estimate2}).

\section{Further questions and remarks}\label{further remarks}
In the process of proofs we have seen that the \emph{strict} positivity or negativity in Theorems \ref{RC vanishing theorem} and \ref{rick lead to uniform} is needed.
In view of Theorem \ref{Bochner}, it is natural to
wonder whether the conclusions remain true if only assuming
quasi-positivity or quasi-negativity of various curvature conditions.
Therefore the following question can be proposed (see \cite[Conjecture 1.9]{Yang20}).
\begin{question}\label{question}
Whether the conclusions in Theorems \ref{RC vanishing theorem},
\ref{rick lead to uniform}, \ref{CKL result} and
\ref{rational connectedness} hold true if various positivity or negativity conditions
 are weakened respectively to quasi-positivity or quasi-negativity?
\end{question}
\begin{remark}
What Heier-Wong showed in \cite{HW} is indeed that
a projective K\"{a}hler manifold with quasi-positive
holomorphic sectional curvature is rationally connected, which provides some positive evidence towards Question \ref{question}. The condition of quasi-positivity in this case was further strengthened by Matsumura in \cite[Thm 1.2]{Ma}. By combining this with some arguments in \cite{Yang20} and \cite{HLWZ}, S. Zhang and X. Zhang showed that the quasi-positivity of $H=\text{Ric}_1$ for a compact K\"{a}hler manifold leads to projectivity and rational-connectedness (\cite{ZZ23}). For more recent results along this line, we refer the reader to \cite{Ta24}, \cite{Ni25} and \cite{DN25}.
\end{remark}

Among various geometric positivity concepts for Hermitian holomorphic vector bundles, the \emph{Griffiths positivity} (\cite{Gr})
may be the best-known. In view of the results proved in this article, it may be interesting to propose the
following notion.
\begin{definition}\label{Griffith kl definition}
A rank $r$ Hermitian holomorphic vector bundle $(E,h)$ over an $n$-dimensional Hermitian manifold $(M,\omega)$
is called \emph{Griffiths $(k,l)$-positive} $(1\leq k\leq n,~1\leq l\leq r)$ if at each point $x\in M$, and
for any $\Sigma\in\mathbb{G}_k(T_xM)$ and any
$\sigma\in\mathbb{G}_l(E_x)$, we have
\be\label{Griffith kl formula}
R^{(E,h)}(\Sigma;\sigma):=
\sum_{\underset{1\leq j\leq l}{1\leq i\leq k}}
R^{(E,h)}(E_i,\overline{E_i},e_j,\overline{e_j})>0,\ee
where $\{E_1,\ldots E_k\}$ (resp. $\{e_1,\ldots,e_l\}$) is a unitary basis of
$\Sigma$ (resp. $\sigma$). \emph{Griffiths $(k,l)$-negativity} and various quasi-versions can be similarly defined.
\end{definition}
\begin{remark}
\begin{enumerate}
\item
By (\ref{integral formula1}), an alternative definition for $R^{(E,h)}(\Sigma;\sigma)$ is
\be\begin{split}
&R^{(E,h)}(\Sigma;\sigma)\\
=&
\frac{k^2}{\text{V}(S^{2k-1})\text{V}(S^{2l-1})}
\iint_{\underset{\{X\in\Sigma,~|X|=1\}}{\{u\in\Gamma,~|u|=1\}}}
R^{(E,h)}(X,\overline{X},u,\bar{u})d\theta(X)d\theta(u).
\end{split}\nonumber\ee

\item
Griffiths $(1,1)$-positivity is the original Griffiths positivity, in which case the metric $\omega$ is irrelevant. By definition Griffiths $(k,l)$-positivity (resp. negativity) implies Griffiths $(k+1,l)$-positivity and $(k,l+1)$-positivity (resp. negativity). Thus the condition Grffiths $(k,l)$-positivity becomes weaker as $k$ or $l$ increases (compare to the $k$-scalar curvatures in Remark \ref{remark2}).

\item
The subject of cohomology vanishing theorems for Griffiths positive holomorphic vector bundles occupies a central role in several complex variables and algebraic geometry (\cite[\S 7]{De}, \cite[\S 6]{SS}). It seems to be possible to generalize these to Griffiths $(k,l)$-positive vector bundles.
\end{enumerate}
\end{remark}

Another two versions of $(k,l)$-positivity (resp. negativity) related to Definition \ref{uniform rc k positivity} are as follows.
\begin{definition}\label{uniform RC kl definition}
Let $(E,h)$ be a rank $r$ Hermitian holomorphic vector bundle over an $n$-dimensional Hermitian manifold $(M,\omega)$,
and $k\in\{1,\ldots,n\}$ and $l\in\{1,\ldots,r\}$.
\begin{enumerate}
\item
It is called \emph{RC $(k,l)$-positive} \big(resp. \emph{BC $(k,l)$-positive}\big) at $x\in M$ if for any $\sigma\in\mathbb{G}_l(E_x)$ \big(resp. $\Sigma\in\mathbb{G}_k(T_xM)$\big), there exists $\Sigma\in \mathbb{G}_k(T_xM)$ \big(resp. $\sigma\in\mathbb{G}_l(E_x)$\big) such that $R^{(E,h)}(\Sigma;\sigma)>0$. If this holds at each $x\in M$, then it is called RC $(k,l)$-positive \big(resp. BC $(k,l)$-positive\big).

\item
It is called \emph{uniformly RC $(k,l)$-positive} \big(resp. \emph{uniformly BC $(k,l)$-positive}\big) at $x\in M$ if there exists $\Sigma\in\mathbb{G}_k(T_xM)$ \big(resp. $\sigma\in\mathbb{G}_l(E_x)$\big) such that for every $\sigma\in\mathbb{G}_l(E_x)$ \big(resp. $\Sigma\in\mathbb{G}_k(T_xM)$\big), $R^{(E,h)}(\Sigma;\sigma)>0.$ If this holds at each $x\in M$, then it is called uniformly RC $(k,l)$-positive \big(resp. uniformly BC $(k,l)$-positive\big).
\end{enumerate}
(Uniform) RC $(k,l)$-negativity or BC $(k,l)$-negativity and various quasi-versions can be similarly defined.
\end{definition}
\begin{remark}\label{uniform RC remark}
\begin{enumerate}
\item
Uniform RC $(k,1)$-positivity is the one in Definition \ref{RC-positivity}, RC $(1,1)$-positivity is the RC-positivity in \cite{Yang18}, and BC $(1,l)$-positivity is the BC $l$-positivity in \cite[p. 280]{Ni21-2}. For CKL Hermitian manifold $(M,\omega)$, $(TM,\omega)$ is (uniformly) RC $(k,l)$-positive (resp. negative) if and only if it is (uniformly) BC $(l,k)$-positive (resp. negative).

\item
It turns out that BC $k$-positivity of the tangent bundle of a Hermitian metric for some $k$ implies $h^{k,0}=0$ (\cite[Coro. 4.4]{Ni21-2}). It may be interesting to explore possible consequences of geometric importance for general uniform RC (resp. BC) $(k,l)$-positivity.

\item
By the proof of Lemma \ref{equivalent relation}, it is easy to see that RC $(k,l)$-positivity or BC $(k,l)$-positivity on a \emph{compact} Hermitian manifold amounts to
$$\min_{x\in M}
\min_{\sigma\in\mathbb{G}_l(E_x)}
\max_{\Sigma\in\mathbb{G}_k(T_xM)}R^{(E,h)}(\Sigma;\sigma)>0$$
or
$$\min_{x\in M}\min_{\Sigma\in\mathbb{G}_k(T_xM)}
\max_{\sigma\in\mathbb{G}_l(E_x)}
R^{(E,h)}(\Sigma;\sigma)>0,$$
and uniform RC $(k,l)$-positivity or uniform BC $(k,l)$-positivity amounts to
$$\min_{x\in M}\max_{\Sigma\in\mathbb{G}_k(T_xM)}
\min_{\sigma\in\mathbb{G}_l(E_x)}R^{(E,h)}(\Sigma;\sigma)>0$$
or
$$\min_{x\in M}\max_{\sigma\in\mathbb{G}_l(E_x)}
\min_{\Sigma\in\mathbb{G}_k(T_xM)}R^{(E,h)}(\Sigma;\sigma)>0$$
respectively.

\item
Besides the curvatures mentioned in this article, some other interesting curvature notions were also introduced by Ni and Ni-Zheng. We refer the reader to their survey article \cite{NZ19} for more details.
\end{enumerate}
\end{remark}

\end{document}